\newtheorem{Theorem}{Theorem}[section]
\newtheorem{Lemma}[Theorem]{Lemma}
\newtheorem{Proposition}[Theorem]{Proposition}
\newtheorem{Corollary}[Theorem]{Corollary}
\theoremstyle{Definition}
\newtheorem{Example}[Theorem]{Example}
\theoremstyle{Remark}
\def\@thmcountersep{-}
\numberwithin{equation}{section}
\begin{document}

\title{On the Wu invariants for immersions of a graph into the plane}

\author{Ryo Nikkuni}
\address{Department of Mathematics, School of Arts and Sciences, Tokyo Woman's Christian University, 2-6-1 Zempukuji, Suginami-ku, Tokyo 167-8585, Japan}
\email{nick@lab.twcu.ac.jp}

\subjclass{Primary 57Q35; Secondary 57M15}

\date{}

\dedicatory{}

\keywords{Immersion, Graph, Wu invariant}

\begin{abstract}
We give an explicit calculation of the Wu invariants for immersions of a finite graph into the plane and classify all generic immersions of a graph into the plane up to regular homotopy by the Wu invariant. This result is a generalization of the fact that two plane curves are regularly homotopic if and only if they have the same rotation number. 
\end{abstract}

\maketitle

\section{Introduction}

Throughout this paper we work in the piecewise linear category. In \cite{wu2}, \cite{wu1}, Wu defined an isotopy invariant of embeddings and immersions of polyhedra into the Euclidean space in terms of the cohomology of deleted product spaces. In case of embeddings, this invariant classifies all embeddings of a graph into ${\mathbb R}^{3}$ up to spatial graph-homology (Taniyama \cite{tani2}). But as far as the author knows, little is known about an application of this invariant in case of immersions. Our purpose in this paper is to give an explicit calculation of Wu's invariant of immersions of a graph into ${\mathbb R}^{2}$ and apply it to a geometric classification. 

Let $G$ be a finite, connected and simple graph which has at least one edge. We denote the set of all vertices (resp. edges) of $G$ by $V(G)$ (resp. $E(G)$). If the terminal vertices of an edge $e$ of $G$ are $u$ and $v$, then we denote $e=(u,v)=(v,u)$. We denote the number of edges incident to a vertex $v$ by ${\rm deg}v$. Note that $G$ has a structure of a finite $1$-dimensional simplicial complex. We regard $G$ as a topological space by considering its geometric realization, namely $G$ is a compact and connected $1$-dimensional polyhedron. In this situation, each of the vertices and the edges of $G$ can be regarded as a subset of $G$. We call a continuous map $f:G\to {\mathbb R}^{2}$ a {\it plane immersion} of $G$ if there exists an open covering $\{U_{\nu}\}$ of $G$ such that $f{|}_{U_{\nu}}$ is an embedding for any $\nu$. A plane immersion $f$ of $G$ is said to be {\it generic} if all of its multipoints are transversal double points away from vertices. We say that two plane immersions $f$ and $g$ of $G$ are {\it regularly homotopic} if there exists a homotopy $F:G\times [0,1]\to {\mathbb R}^{2}$ from $f$ to $g$ and an open covering $\{U_{\nu}\}$ of $G$ such that ${f}_{t}{|}_{U_{\nu}}$ is an embedding for any $\nu$ and for any $t\in [0,1]$, where $f_{t}$ is a continuous map from $G$ to ${\mathbb R}^{2}$ defined by $f_{t}(x)=F(x,t)$ for any $x\in G$. Note that regular homotopy defines an equivalence relation on plane immersions of a graph.\footnote{This equivalence relation was introduced in \cite{wu1} by the name of {\it local isotopy}.}

We give the precise definition of the Wu invariant ${\mathcal R}(f)$ of a plane immersion $f$ of a graph in the next section and also give an explicit calculation of ${\mathcal R}(f)$ in section $3$. It can be calculated as a first cohomology class of a subspace of the symmetric deleted product of the graph, which is called the {\it symmetric tube} of the graph. Moreover we have the following classification theorem. 

\begin{Theorem}\label{main} 
Let $f$ and $g$ be two generic plane immersions of a graph $G$. Then the following are equivalent: 
\begin{enumerate}
\item $f$ and $g$ are regularly homotopic. 
\item $f$ and $g$ are transformed into each other by 
the local moves as illustrated in Fig. \ref{rhomo2} (1), (2), (3) and ambient isotopies. 
\item ${\mathcal R}(f)={\mathcal R}(g)$. 
\end{enumerate}
\end{Theorem}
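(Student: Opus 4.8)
My plan is to prove the three conditions equivalent by establishing the cycle $(1)\Rightarrow(2)\Rightarrow(3)\Rightarrow(1)$. For $(1)\Rightarrow(2)$ I would run a general-position argument on the regular homotopy itself. Given a regular homotopy $F\colon G\times[0,1]\to\mathbb{R}^{2}$ from $f$ to $g$, I would first perturb $F$ relative to the endpoints $t=0,1$, keeping it a regular homotopy, so that for all but finitely many $t$ the immersion $f_{t}$ is generic and so that at each exceptional time exactly one elementary degeneration occurs. Since $f_{t}$ is an immersion for \emph{every} $t$, the only codimension-one degenerations available to a one-parameter family are the birth or death of a transversal double point between two arcs, a momentary triple point, and the passage of a double point across a vertex; a cusp that would alter the turning is forbidden precisely because $F$ is an immersion at every level. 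These are exactly the local moves of Fig.~\ref{rhomo2}~(1),~(2),~(3), so subdividing $[0,1]$ at the exceptional times exhibits $f$ and $g$ as related by a finite sequence of these moves together with ambient isotopies on the complementary intervals.

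For $(2)\Rightarrow(3)$ I would verify that $\mathcal{R}$ is invariant under each of the three moves and under ambient isotopy, using the explicit description of $\mathcal{R}(f)$ as a first cohomology class on the symmetric tube obtained in Section~3. Ambient-isotopy invariance is immediate from the intrinsic definition. For each local move I would write down the representing $1$-cocycle before and after; because every move is supported in a disk, the two cocycles differ only on the chains meeting that disk, and a direct computation shows the difference is a coboundary. This step is essentially a bookkeeping calculation once the formula of Section~3 is in hand, and I do not expect it to present real difficulty.

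The implication $(3)\Rightarrow(1)$ is the heart of the theorem and the step I expect to be hardest, since it asserts that $\mathcal{R}$ is a complete invariant. Having already shown $(1)\Rightarrow(2)\Rightarrow(3)$, I know $\mathcal{R}$ descends to a well-defined function on regular-homotopy classes, so it suffices to prove injectivity. My approach would be to reduce every generic immersion to a normal form by the moves of~$(2)$: fixing a spanning tree $T$ of $G$, I would use moves~(1) and~(3) to push all double points off $T$ and to make $f|_{T}$ a standard embedded tree, after which each edge not in $T$ becomes an arc whose regular-homotopy class relative to its endpoints is measured by a single turning number, in the spirit of the Whitney--Graustein theorem. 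The substance is then to show that the collection of these turning numbers, together with the vertex-rotation data, is precisely the information recorded by $\mathcal{R}$, so that the equality $\mathcal{R}(f)=\mathcal{R}(g)$ forces the two normal forms to coincide. Identifying $H^{1}$ of the symmetric tube with this combinatorial data, and checking that the Gauss (tangent-direction) map realizes the identification, is where the genuine work lies; once it is done, $f$ and $g$ reduce to the same normal form and are therefore regularly homotopic, closing the cycle.
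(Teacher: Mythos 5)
Your overall plan is sound and your treatment of the hard direction is essentially the paper's, but you orient the cycle of implications the opposite way, and this changes which steps cost real work. The paper takes $(2)\Rightarrow(1)$ as immediate (each local move is a regular homotopy), gets $(1)\Rightarrow(3)$ for free from Proposition \ref{inva} (the homotopy $F$ induces a homotopy $F_{\mathcal U}$ on the deleted neighborhood, so $\bar f^{*}=\bar g^{*}$ on cohomology), and concentrates everything in $(3)\Rightarrow(2)$. You instead prove $(1)\Rightarrow(2)$ by a general-position analysis of the one-parameter family and $(2)\Rightarrow(3)$ by checking cocycle invariance move by move. Both of these are avoidable: the cocycle check is subsumed by Proposition \ref{inva} composed with the trivial $(2)\Rightarrow(1)$, and the codimension-one classification of degeneracies of a PL regular homotopy of a graph (tangency, triple point, double point through a vertex, with cusps excluded) is exactly the kind of transversality bookkeeping the paper's routing is designed to sidestep; if you keep your routing you should at least acknowledge that perturbing $F$ to a generic family \emph{while remaining an immersion at every level} needs an argument, and that this is the least routine part of your $(1)\Rightarrow(2)$. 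For the completeness direction your normal-form strategy coincides with the paper's: fix a spanning tree $T_G$, use the moves (together with the Whitney trick, which the paper first shows in Lemma \ref{wtrick} is generated by moves (1)--(3)) to make $f|_{T_G}=g|_{T_G}$, match the cyclic orders at vertices via the restriction Lemma \ref{keylemma1} applied to the stars ${\rm st}(v_s)$ (the $Y$-components of Theorem \ref{present}), and match each fundamental cycle $\gamma_{k_i}=e_{k_i}\cup p_{k_i}$ by its rotation number (the $X$-components). So your ``turning numbers of non-tree edges plus vertex-rotation data'' is precisely the decomposition of $H^{1}(G^{(0)};{\mathbb Z})$ in Theorem \ref{present}; what the paper's orientation of the cycle buys is that the only genuinely new content in Section 4 is this normal-form reduction, while yours buys a self-contained Reidemeister-type statement at the price of a transversality argument the paper never has to make.
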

\begin{figure}[htbp]
      \begin{center}
\scalebox{0.45}{\includegraphics*{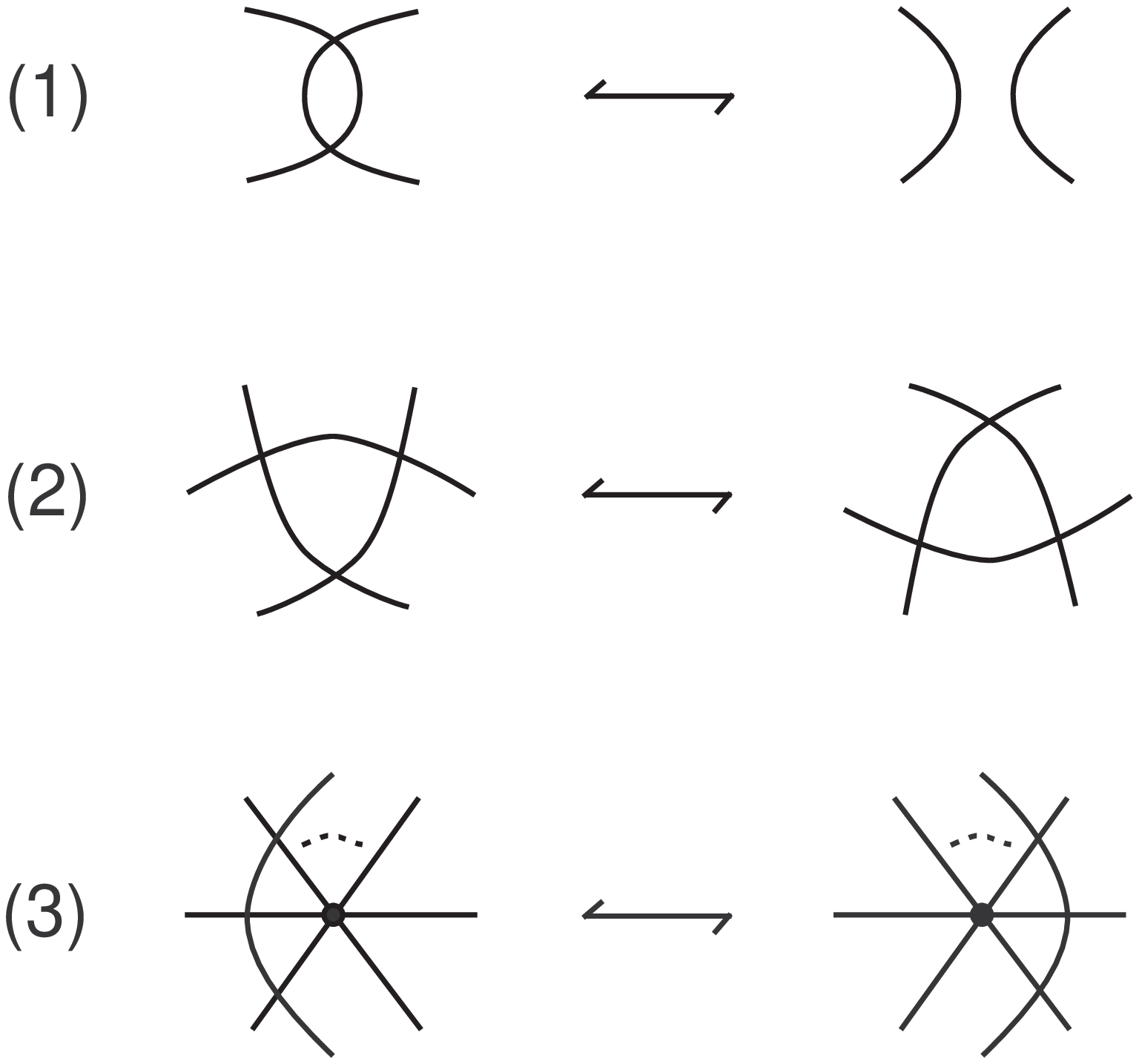}}
      \end{center}
   \caption{}
  \label{rhomo2}
\end{figure}

Let $K_{m}$ be the {\it complete graph} on $m$ vertices for a positive 
integer $m$, namely $V(K_{m})=\{v_{1},v_{2},\ldots,v_{m}\}$ and $E(K_{m})=\{(v_{i},v_{j})~(1\le i<j\le m)\}$. A plane immersion $f$ of $K_{3}$ is called a {\it plane curve}. By Theorem \ref{main}, we have the following corollary. 

\begin{Corollary}\label{maincor} 
Let $f$ and $g$ be two generic plane curves. Then the following are equivalent: 
\begin{enumerate}
\item $f$ and $g$ are regularly homotopic. 
\item $f$ and $g$ are transformed into each other by 
the local moves as illustrated in Fig. \ref{rhomo2} (1), (2) and ambient isotopies.
\item ${\mathcal R}(f)={\mathcal R}(g)$. 
\end{enumerate}
\end{Corollary}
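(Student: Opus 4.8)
The plan is to obtain Corollary \ref{maincor} as the specialization of Theorem \ref{main} to the graph $G = K_3$. Since a plane curve is by definition a generic plane immersion of $K_3$, conditions (1) and (3) of the corollary are literally conditions (1) and (3) of Theorem \ref{main}, and condition (2) of the corollary differs from condition (2) of the theorem only in that the list of permitted local moves has been shortened from $(1),(2),(3)$ to $(1),(2)$. Hence the entire content to be established is that, for immersions of $K_3$, the local move (3) of Fig. \ref{rhomo2} is redundant, so that the equivalence relation generated by moves (1), (2) and ambient isotopies coincides with the one generated by moves (1), (2), (3) and ambient isotopies.

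First I would examine the three moves in Fig. \ref{rhomo2}. Moves (1) and (2) are supported on a regular neighborhood of an embedded arc (the usual self-tangency and triple-point moves for generic immersed curves), and such arcs are available inside any edge of $K_3$; so these moves transfer without change. Move (3), on the other hand, is the move that involves the branching structure of the graph and is supported on a regular neighborhood of a vertex of degree at least $3$. The decisive observation is that $K_3$ is the triangle: every vertex of $K_3$ has degree exactly $2$, and indeed $K_3$ is homeomorphic to a circle with three marked points. Therefore move (3) can never be applied to a generic plane immersion of $K_3$, and it may be deleted from the list of moves without changing the generated equivalence relation. Applying Theorem \ref{main} with $G = K_3$ then yields at once the equivalences $(1) \Leftrightarrow (2) \Leftrightarrow (3)$ of the corollary.

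The only delicate point, and the step I expect to be the main obstacle, is the preliminary verification that move (3) genuinely requires a vertex of degree at least $3$ and that nothing in the generic-immersion or regular-homotopy framework forces a degenerate degree-$2$ instance of move (3) to intervene for $K_3$. Should such a degenerate instance arise, the plan would instead be to exhibit it as a composition of moves (1), (2) and ambient isotopies; I expect this to be unnecessary, since the resulting statement is exactly the Whitney--Graustein theorem --- two generic plane curves are regularly homotopic if and only if they have the same rotation number --- for which the self-tangency and triple-point moves are classically known to suffice.
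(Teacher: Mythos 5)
Your proposal is correct and takes essentially the same route as the paper, which derives the corollary directly from Theorem \ref{main} with no further argument; you simply make explicit the one observation the paper leaves implicit, namely that the vertex move (3) of Fig. \ref{rhomo2} requires a vertex of degree at least $3$ and is therefore vacuous for $K_{3}$, whose vertices all have degree $2$. The residual worry you flag about degenerate degree-$2$ instances is exactly the point the paper also disposes of only by appeal to Whitney--Graustein and Kauffman, so your treatment matches the paper's level of detail.
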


We prove Theorem \ref{main} in section $4$. As we will see in Example \ref{circle}, ${\mathcal R}(f)$ of a plane curve $f$ coincides with the {\it rotation number} \cite{whitney} of $f$. (In this paper we consider the orientation on ${\mathbb R}^{2}$ with positive rotation numbers in the counterclockwise direction.) Thus Corollary \ref{maincor} coincides with the regular homotopy classification of plane curves by Whitney-Graustein's theorem \cite{whitney} and Kauffman's combinatorial interpretation \cite{kauffman}. We remark here that recently Permyakov gives a simple combinatorial interpretation of ${\mathcal R}(f)$ and shows a theorem which corresponds to Theorem \ref{main} \cite{perm}.

\section{Wu invariant} 

In this section we give the definition of the Wu invariant of a plane immersion of a graph $G$. We refer the reader to \cite{wu1} for the general case. Let $X$ be a topological space. For the embedding $\tilde{d}:X\to X\times X$ defined by $\tilde{d}(x)=(x,x)$, we call $\widetilde{X}^{*}=(X\times X)\setminus \tilde{d}(X)$ the {\it deleted product} of $X$. A map $\sigma(x,y)=(y,x)$ gives a free ${\mathbb Z}_{2}$-action on $\widetilde{X}^{*}$. We call ${X}^{*}={\widetilde{X}^{*}}/{\mathbb Z}_{2}$ the {\it symmetric deleted product} of $X$. We denote the image of $\tilde{d}(X)$ by the natural projection from $X\times X$ to $(X\times X)/{\mathbb Z}_{2}$ by ${d}(X)$. Let $U$ be a neighborhood of $\tilde{d}(X)$ in $X\times X$. Then $\widetilde{U}^{*}=U \setminus \tilde{d}(X)$ is called a {\it deleted neighborhood of $\tilde{d}(X)$ in $\widetilde{X}^{*}$}. A deleted neighborhood $\widetilde{U}^{*}$ is said to be {\it $\sigma$-invariant} if $\sigma(U)=U$. Then we call ${U}^{*}={\widetilde{U}^{*}}/{\mathbb Z}_{2}$ a {\it symmetric deleted neighborhood of ${d}(X)$ in ${X}^{*}$}. 
 
For a graph $G$, let $\{U_{\lambda}^{*}\}$ be the set of all symmetric deleted 
neighborhoods of ${d}(G)$ in ${G}^{*}$. Then $\{U_{\lambda}^{*},\prec\}$ forms an oriented set by $U_{\lambda}^{*}\prec U_{\mu}^{*}$ if $U_{\lambda}^{*}\supset
U_{\mu}^{*}$. For this oriented set, $\{H^{1}(U_{\lambda}^{*};{\mathbb Z}),{i_{\lambda}^{\mu}}^{*}\}$ forms an inductive system of modules, where $H^{1}(\cdot;{\mathbb Z})$ denotes the integral first cohomology group and 
\begin{eqnarray*}
{i_{\lambda}^{\mu}}^{*}:
H^{1}(U_{\lambda}^{*};{\mathbb Z})\to H^{1}(U_{\mu}^{*};{\mathbb Z})
\end{eqnarray*}
is a homomorphism induced by the inclusion. Then we denote the inductive limit $\displaystyle {\lim_{\longrightarrow}}~H^{1}({U}_{\lambda}^{*};{\mathbb Z})$ by $R(G)$. We note that we have the following natural homomorphism 
\begin{eqnarray}
i_{\lambda}^{*}:H^{1}({U}_{\lambda}^{*};{\mathbb Z})
\longrightarrow R(G) \label{natural}
\end{eqnarray}
for any symmetric deleted neighborhood ${U}_{\lambda}^{*}$ of $d(G)$ in $G^{*}$. 

Let $f:G\to {\mathbb R}^{2}$ be a plane immersion. Namely there exists an open covering ${\mathcal U}=\{U_{\nu}\}$ of $G$ such that $f{|}_{U_{\nu}}$ is an embedding for any $\nu$. Then the set 
\begin{eqnarray*}
\widetilde{W}_{\mathcal U}=\{(x_{1},x_{2})\in \widetilde{G}^{*}~|~
{\rm there~exists~a~}U_{\nu}~{\rm such~that~}x_{1},x_{2}\in U_{\nu}\}
\end{eqnarray*}
forms a $\sigma$-invariant 
deleted neighborhood of $\tilde{d}(G)$ in $\widetilde{G}^{*}$ and a continuous map $\bar{f}:W_{\mathcal U}\to ({\mathbb R}^{2})^{*}$ is defined by $\bar{f}[x_{1},x_{2}]=[f(x_{1}),f(x_{2})]$. On the other hand, it is well known that a continuous map $r:({\widetilde{\mathbb R}}^{2})^{*}\to {\mathbb S}^{1}$ defined by $r(y_{1},y_{2})=(y_{1}-y_{2})/||y_{1}-y_{2}||$ is a $\sigma$-equivariant strong deformation retract and ${r}:({\mathbb R}^{2})^{*}\to {\mathbb S}^{1}/{\mathbb Z}_{2}\approx {\mathbb S}^{1}$ is also a strong deformation retract, where ${\mathbb S}^{1}/{\mathbb Z}_{2}$ denotes the quotient space of ${\mathbb S}^{1}$ by identifying the antipodal points. Let $\Sigma$ be a generator of $H^{1}({\mathbb S}^{1};{\mathbb Z})\cong {\mathbb Z}$. Then the image of $\Sigma$ by the composition 
\begin{eqnarray*}
H^{1}({\mathbb S}^{1};{\mathbb Z})\stackrel{\stackrel{{r}^{*}}{\cong}}
{\longrightarrow} 
H^{1}({{\mathbb R}^{2}}^{*};{\mathbb Z})\stackrel{{\bar{f}}^{*}}{\longrightarrow}
H^{1}({W}_{\mathcal U};{\mathbb Z})\stackrel{i_{{\mathcal U}}^{*}}{\longrightarrow} 
R(G)
\end{eqnarray*}
is denoted by ${\mathcal R}(f)$, where $i_{{\mathcal U}}^{*}$ is the natural homomorphim of (\ref{natural}) for ${W}_{\mathcal U}$. We call ${\mathcal R}(f)$ a {\it Wu invariant} \footnote{This invariant was introduced in \cite{wu1} by the name of {\it local isotopy class} and denoted by $\Lambda_{f}(G)$.} of $f$. We remark here that the definition above is independent of the choice of ${\mathcal U}$.  
\begin{Proposition}{\rm (\cite{wu1})}\label{inva}  
${\mathcal R}(f)$ is a regular homotopy invariant. 
\end{Proposition}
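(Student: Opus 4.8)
The plan is to exploit the fact that a regular homotopy from $f$ to $g$ comes equipped with a single open covering ${\mathcal U}=\{U_{\nu}\}$ of $G$ for which $f_{t}{|}_{U_{\nu}}$ is an embedding simultaneously for all $t\in[0,1]$. Since ${\mathcal R}$ is independent of the choice of covering, I would compute both ${\mathcal R}(f)$ and ${\mathcal R}(g)$ using this common ${\mathcal U}$, so that the symmetric deleted neighborhood $W_{\mathcal U}$, the natural homomorphism $i_{\mathcal U}^{*}:H^{1}(W_{\mathcal U};{\mathbb Z})\to R(G)$, and the retraction-induced isomorphism $r^{*}$ are literally the same in both computations. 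Consequently ${\mathcal R}(f)$ and ${\mathcal R}(g)$ can differ only through the maps $\bar{f}^{*}$ and $\bar{g}^{*}$, and it suffices to show that these two homomorphisms $H^{1}(({\mathbb R}^{2})^{*};{\mathbb Z})\to H^{1}(W_{\mathcal U};{\mathbb Z})$ coincide.

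To establish this I would show that $\bar{f}$ and $\bar{g}$ are homotopic as maps $W_{\mathcal U}\to ({\mathbb R}^{2})^{*}$. Define $\widetilde{F}:\widetilde{W}_{\mathcal U}\times[0,1]\to {\mathbb R}^{2}\times{\mathbb R}^{2}$ by $\widetilde{F}((x_{1},x_{2}),t)=(f_{t}(x_{1}),f_{t}(x_{2}))$. The essential verification---and the only place where the regular homotopy hypothesis (rather than mere homotopy) is used---is that $\widetilde{F}$ actually takes values in the deleted product $({\widetilde{\mathbb R}}^{2})^{*}$: whenever $(x_{1},x_{2})\in\widetilde{W}_{\mathcal U}$ we have $x_{1}\neq x_{2}$, and $x_{1},x_{2}\in U_{\nu}$ for some $\nu$, so the injectivity of $f_{t}{|}_{U_{\nu}}$ forces $f_{t}(x_{1})\neq f_{t}(x_{2})$ for every $t$. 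Thus $\widetilde{F}$ is a continuous $\sigma$-equivariant homotopy into $({\widetilde{\mathbb R}}^{2})^{*}$, and it descends to a continuous homotopy $\bar{F}:W_{\mathcal U}\times[0,1]\to ({\mathbb R}^{2})^{*}$ from $\bar{f}$ to $\bar{g}$.

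Finally, homotopic maps induce identical homomorphisms on cohomology, so $\bar{f}^{*}=\bar{g}^{*}$. Composing with the common maps $r^{*}$ and $i_{\mathcal U}^{*}$ and evaluating on the generator $\Sigma$ then yields ${\mathcal R}(f)={\mathcal R}(g)$. The proof is short precisely because all the delicate work has been front-loaded into the definition: the real content is that a regular homotopy supplies one covering valid for every $t$, which is exactly what keeps $\widetilde{F}$ inside the deleted product. I expect the only (minor) obstacle to be the bookkeeping of the ${\mathbb Z}_{2}$-quotients---verifying that $\widetilde{F}$ is genuinely $\sigma$-equivariant and that its descent $\bar{F}$ is continuous on the quotient---but this is routine since the quotient projections are open.
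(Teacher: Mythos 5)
Your proposal is correct and is essentially the paper's own argument: both use the single covering ${\mathcal U}$ supplied by the regular homotopy to define the homotopy $W_{\mathcal U}\times[0,1]\to({\mathbb R}^{2})^{*}$, $([x_{1},x_{2}],t)\mapsto[f_{t}(x_{1}),f_{t}(x_{2})]$, and conclude $\bar{f}^{*}=\bar{g}^{*}$ by homotopy invariance of cohomology. You simply make explicit the well-definedness check (that injectivity of $f_{t}|_{U_{\nu}}$ keeps the homotopy inside the deleted product) that the paper leaves implicit.
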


\begin{proof}
Let $f$ and $g$ be two regularly homotopic plane immersions of $G$. Namely there exists a homotopy $F:G\times [0,1]\to {\mathbb R}^{2}$ from $f$ to $g$ and an open covering $\{U_{\nu}\}$ of $G$ such that ${f}_{t}{|}_{U_{\nu}}$ is an embedding for any $\nu$ and for any $t\in [0,1]$, where $f_{t}(x)=F(x,t)$ for $x\in G$. Then we can define a homotopy $F_{\mathcal U}:W_{\mathcal U}\times [0,1]\longrightarrow ({\mathbb R}^{2})^{*}$ from $\bar{f}$ to $\bar{g}$ by $F_{\mathcal U}([x_{1},x_{2}],t)=[f_{t}(x_{1}),f_{t}(x_{2})]$. Thus we have that 
${\mathcal R}(f)=
i_{{\mathcal U}}^{*}{\bar{f}}^{*}{\bar{r}}^{*}(\Sigma)=
i_{{\mathcal U}}^{*}{\bar{g}}^{*}{\bar{r}}^{*}(\Sigma)=
{\mathcal R}(g)$. 
This completes the proof. 
\end{proof}

\section{Symmetric tube of a graph} 

A precise method to calculate $R(G)$ is provided in \cite{wu1}. Let $X$ and $Y$ be two topological spaces and $M=X\cup (X\times Y\times [0,1])\cup Y$ the disjoint union. Let us consider a quotient space by identifying $(x,y,0)\in X\times Y\times [0,1]$ with $x\in X$ and $(x,y,1)\in X\times Y\times [0,1]$ with $y\in Y$. We call the quotient space a {\it join of $X$ and $Y$} and denote it by $X{\circ}Y$. We set $[X,Y]^{(0)}=\{[x,y,1/2]\in X{\circ}Y~|~x\in X,~y\in Y\}$. For example, the join $v{\circ}e$ of a vertex $v$ and an edge $e$ is homeomorphic to a $2$-simplex, and $[v,e]^{(0)}$ is homeomorphic to a $1$-simplex. The following is a special case of what is called a {\it canonical cellular decomposition} of the product space of $X$ \cite{wu1}, \cite{hu}. 

\begin{Proposition}\label{decomp}  
Let $G$ be a graph. Then $G\times G$ is decomposed into the following cells: 
\begin{enumerate}
\item $\tilde{d}(s)$ for $s\in V(G)$ or $E(G)$. 
\item $s_{1}\times s_{2}$ for $s_{i}\in V(G)$ or $E(G)\ (i=1,2)$ and $s_{1}\cap s_{2}=\emptyset$.  
\item $\tilde{d}(s)\circ (s_{1}\times s_{2})$ for $s,s_{1},s_{2}\in V(G)$ or $E(G)$, $s_{1}\cap s_{2}=\emptyset$ and $s\cup s_{i}$ is contained in a vertex or an edge of $G\ (i=1,2)$. 
\end{enumerate}
\end{Proposition}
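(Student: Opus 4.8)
The plan is to prove the statement by exhibiting the three listed families as the cells of a genuine regular cell decomposition of $G\times G$, working one product cell at a time and matching the combinatorial conditions in (1), (2), (3) against the pieces produced by subdividing each product cell over its intersection with the diagonal.

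First I would fix a geometric realization of $G$ in which the vertices are affinely independent points of some $\mathbb{R}^{N}$ and each edge is the segment spanned by its endpoints; then $G\times G$ sits in $\mathbb{R}^{N}\times\mathbb{R}^{N}$, and each product of closed cells $\bar{s}\times\bar{t}$ (with $s,t\in V(G)\cup E(G)$) is a point, a segment, or a square. In this model the join $\tilde{d}(s)\circ(s_{1}\times s_{2})$ of (3) is realized concretely: a point $[(x,x),(y,z),\lambda]$ of the abstract join is sent to the affine combination $((1-\lambda)x+\lambda y,\,(1-\lambda)x+\lambda z)$. The hypothesis that each $s\cup s_{i}$ lies in a common cell $r_{i}$ of $G$ guarantees that this point lies in $\bar{r}_{1}\times\bar{r}_{2}\subset G\times G$ and that the listed vertices are affinely independent; hence every cell of type (3) is an embedded simplex of dimension $1+\dim s+\dim s_{1}+\dim s_{2}$, every cell of type (1) is the diagonal image of a simplex, and every cell of type (2) is an embedded product of simplices. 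Establishing this embeddedness, via affine independence and the condition $s_{1}\cap s_{2}=\emptyset$, is the first routine check.

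Next I would show that the open cells of the three families partition $G\times G$. I would start from the standard fact that the open product cells ${\rm int}(\sigma)\times{\rm int}(\tau)$ partition $G\times G$, and then show that within each such open product cell the listed cells restrict to a partition, by the following case analysis (reading intersections as closed cells): if $\sigma\cap\tau=\emptyset$ the open product cell is the single cell (2); if $\sigma=\tau$ is a vertex it is the single cell $\tilde{d}(\sigma)$ of (1); if $\sigma$ is a vertex lying on the edge $\tau$ (or conversely) it is a single $1$--cell of (3); if $\sigma=\tau$ is an edge $e=(u,v)$ the open square is cut by the diagonal into the two open triangles $\tilde{d}(e)\circ(u\times v)$, $\tilde{d}(e)\circ(v\times u)$ and the open diagonal $\tilde{d}(e)$; and if $\sigma,\tau$ are distinct edges sharing a vertex $a$, with other endpoints $b$ and $c$, the open square is cut from the corner $\tilde{d}(a)$ into two open triangles together with the segment $\tilde{d}(a)\circ(b\times c)$. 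The decisive bookkeeping step is to verify these are the only listed cells meeting a given open product cell: writing $s\vee s_{1}$ for the smallest cell containing $s\cup s_{1}$, the interior of $\tilde{d}(s)\circ(s_{1}\times s_{2})$ lies in the single open product cell ${\rm int}(s\vee s_{1})\times{\rm int}(s\vee s_{2})$, so for each prescribed open square one only has to solve the constraints $s\vee s_{1}=\sigma$, $s\vee s_{2}=\tau$, $s_{1}\cap s_{2}=\emptyset$ and check that the solutions are exactly the triangles and diagonal listed above.

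Finally I would verify the face relations, promoting the partition to a regular cell decomposition: the closure of each listed cell is a union of listed cells of lower dimension. This amounts to reading off the boundary of each simplex; for instance the closed triangle $\tilde{d}(e)\circ(u\times v)$ has as faces the diagonal $\tilde{d}(e)$, the two $1$--cells $\tilde{d}(u)\circ(u\times v)$ and $\tilde{d}(v)\circ(u\times v)$, and the vertices $\tilde{d}(u)$, $\tilde{d}(v)$, $u\times v$, all of which appear in the list. I expect the genuine obstacle to be neither the geometry nor the face relations, which are local and elementary, but precisely the enumeration of the previous paragraph: one must show that the three combinatorial families match, with no omissions or repetitions, the pieces obtained by subdividing every product cell over its intersection with the diagonal, and in particular that the conditions ``$s_{1}\cap s_{2}=\emptyset$'' and ``$s\cup s_{i}$ lies in a cell'' are exactly what excludes the would-be cell $e_{1}\times e_{2}$ on two edges meeting at a vertex while retaining its two triangular halves. (Alternatively one may simply specialise the canonical cellular decomposition of \cite{wu1}, \cite{hu} to the $1$--dimensional complex $G$ and read off the cells, but the direct verification above is the more transparent route.)
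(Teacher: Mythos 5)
The paper offers no proof of this proposition at all: it simply observes that the statement is a special case of the canonical cellular decomposition of a product of polyhedra and cites \cite{wu1} and \cite{hu}. Your proposal instead carries out a direct, self-contained verification for the $1$--dimensional case, and it is correct. The three checks you isolate (embeddedness of each listed cell via the affine model of the join, the partition of each open product cell ${\rm int}(\sigma)\times{\rm int}(\tau)$ by a case analysis on $\sigma,\tau$, and the closure/face relations) are exactly what is needed, and your case analysis is exhaustive for a simple graph: the only nontrivial open product cells are the square over a single edge, cut by the diagonal into $\tilde{d}(e)$, $\tilde{d}(e)\circ(u\times v)$ and $\tilde{d}(e)\circ(v\times u)$, and the square over two edges sharing a vertex $a$, cut into $\tilde{d}(a)\circ(e_{1}\times c)$, $\tilde{d}(a)\circ(b\times e_{2})$ and $\tilde{d}(a)\circ(b\times c)$; your observation that ${\rm int}\bigl(\tilde{d}(s)\circ(s_{1}\times s_{2})\bigr)\subset{\rm int}(s\vee s_{1})\times{\rm int}(s\vee s_{2})$ correctly reduces uniqueness to solving $s\vee s_{1}=\sigma$, $s\vee s_{2}=\tau$ under $s_{1}\cap s_{2}=\emptyset$, and that condition does rule out both the $3$--dimensional would-be cell $\tilde{d}(a)\circ(e_{1}\times e_{2})$ and the product $e_{1}\times e_{2}$ of adjacent edges. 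What your route buys is a transparent, elementary proof that does not require unpacking the general machinery of \cite{wu1} and \cite{hu}; what it costs is length, and it is tied to $\dim G=1$, whereas the cited canonical decomposition works for arbitrary simplicial complexes. Either route is acceptable here.
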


In particular, the cellular complex which consists of all cells $[s,s_{1}\times s_{2}]^{(0)}$ for simplices $s$, $s_{1}$ and $s_{2}$ of Proposition \ref{decomp} (3) is called a {\it tube} of $G$ and is denoted by $\widetilde{G}^{(0)}$. Clearly $\widetilde{G}^{(0)}$ is $\sigma$-invariant in $\widetilde{G}^{*}$. We call $\widetilde{G}^{(0)}/{\mathbb Z}_{2}$ a {\it symmetric tube} of $G$ and denote it by $G^{(0)}$. We denote $[s,s_{1}\times s_{2}]^{(0)}/{\mathbb Z}_{2}$ by $[s,s_{1}*s_{2}]$, and we note that $[s,s_{1}*s_{2}]=[s,s_{2}*s_{1}]$. 

\begin{Example}\label{cell_dec_ex}
{\rm Let $K_{3}$ be the complete graph on three vertices as illustrated in Fig. \ref{cell_decomp}. The figure on the right side in Fig. \ref{cell_decomp} illustrates the canonical cellular decomposition of $K_{3}\times K_{3}$ in the sense of Proposition \ref{decomp} as an expanded diagram of the $2$-dimensional torus. The dotted thick parts and black thick parts represent the cells of Proposition \ref{decomp} (1) and (2), respectively. The gray thick parts represent ${\widetilde{K}}_{3}^{(0)}$. Thus we can see that $K_{3}^{(0)}$ is homeomorphic to the circle. 
}
\end{Example}
\begin{figure}[htbp]
      \begin{center}
\scalebox{0.425}{\includegraphics*{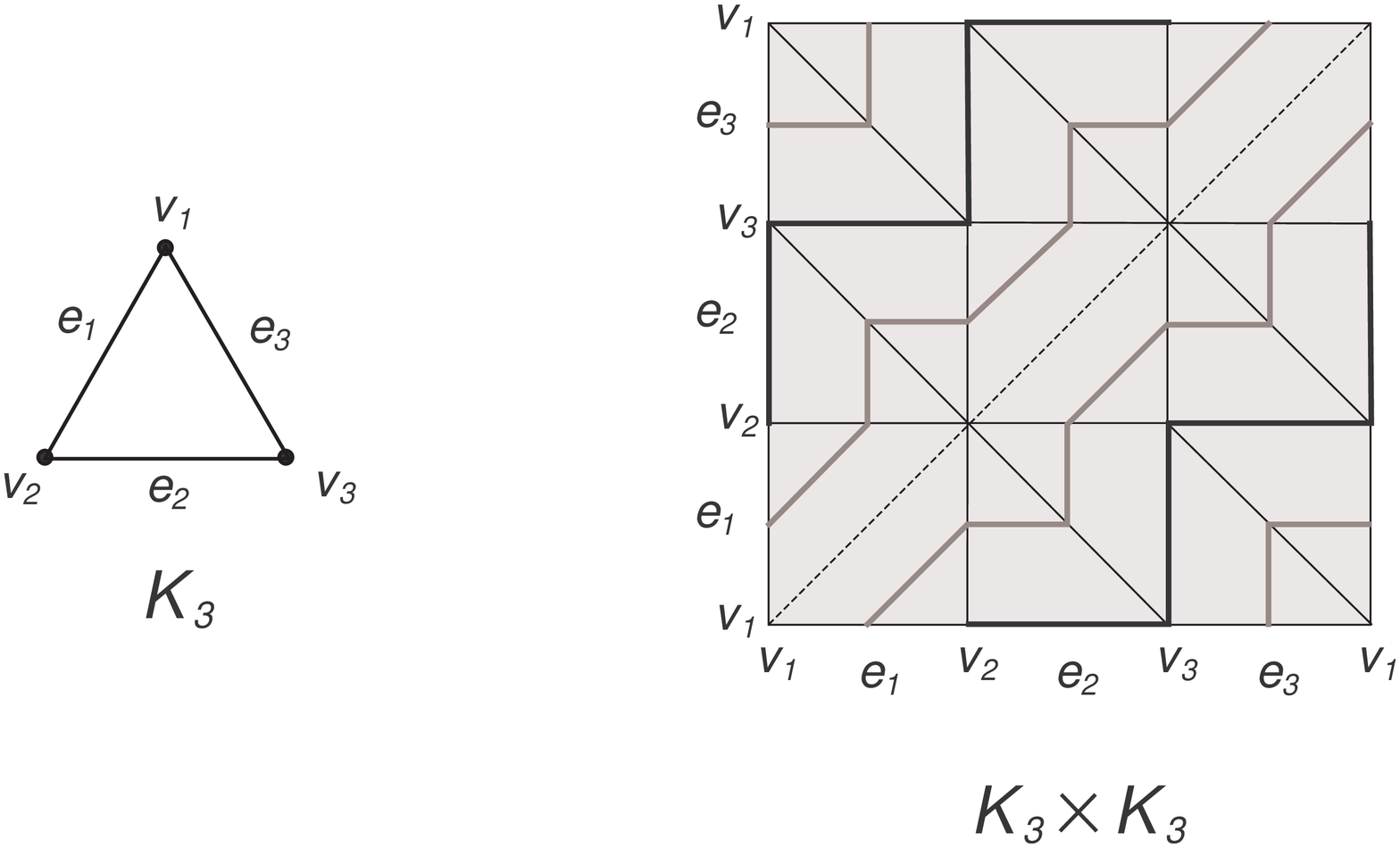}}
      \end{center}
   \caption{}
  \label{cell_decomp}
\end{figure}

Let $P(\widetilde{G}^{*})$ be the cellular complex which consists of all cells $s_{1}\times s_{2}$ for simplices $s_{1},s_{2}$ of Proposition \ref{decomp} (2). Clearly $P(\widetilde{G}^{*})$ is also $\sigma$-invariant in $\widetilde{G}^{*}$. We denote $P(\widetilde{G}^{*})/{\mathbb Z}_{2}$ by $P({G}^{*})$. We note that $G^{*}\setminus P({G}^{*})$ is a symmetric deleted neighborhood of ${d}(G)$ in ${G}^{*}$. It is known that  
\begin{eqnarray*}
i_{G^{*}\setminus P({G}^{*})}^{*}:H^{1}(G^{*}\setminus 
P({G}^{*});{\mathbb Z})\stackrel{\cong}{\longrightarrow}  R(G), 
\end{eqnarray*}
where $i_{G^{*}\setminus P({G}^{*})}^{*}$ is the natural homomorphism of (\ref{natural}) for $G^{*}\setminus P({G}^{*})$, and there exists a deformation retract $j:G^{*}\setminus P({G}^{*})\to G^{(0)}$ \cite{wu1}. Therefore we have the following. 

\begin{Theorem}{\rm (\cite{wu1})}\label{iso}  
$i_{G^{(0)}}^{*}=i_{G^{*}\setminus P({G}^{*})}^{*} j^{*}:H^{1}(G^{(0)};{\mathbb Z})\stackrel{\cong}{\longrightarrow} R(G)$. 
\end{Theorem}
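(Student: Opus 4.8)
The plan is to obtain the claimed isomorphism by simply composing the two facts imported from Wu \cite{wu1} that are recorded immediately before the statement, so that nothing beyond the homotopy invariance of cohomology is needed. Write $A=G^{*}\setminus P(G^{*})$ and $B=G^{(0)}$. We are handed (i) that the natural homomorphism $i_{A}^{*}:H^{1}(A;{\mathbb Z})\to R(G)$ into the inductive limit is an isomorphism, and (ii) that $j:A\to B$ is a deformation retraction onto the symmetric tube $B\subseteq A$.

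First I would upgrade (ii) to a statement about first cohomology. Let $\iota:B\hookrightarrow A$ denote the inclusion. Since $j$ is a retraction we have $j\circ\iota=\mathrm{id}_{B}$, and since it is a \emph{deformation} retraction we have $\iota\circ j\simeq\mathrm{id}_{A}$. Applying $H^{1}(\,\cdot\,;{\mathbb Z})$ and using functoriality together with the homotopy invariance of cohomology gives $\iota^{*}\circ j^{*}=\mathrm{id}_{H^{1}(B)}$ and $j^{*}\circ\iota^{*}=\mathrm{id}_{H^{1}(A)}$. Hence $j^{*}:H^{1}(G^{(0)};{\mathbb Z})\to H^{1}(G^{*}\setminus P(G^{*});{\mathbb Z})$ is an isomorphism, with inverse $\iota^{*}$.

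Finally I would compose. The map $i_{G^{(0)}}^{*}=i_{G^{*}\setminus P(G^{*})}^{*}\,j^{*}$ is a composite of two isomorphisms by (i) and by the previous step, hence is itself an isomorphism $H^{1}(G^{(0)};{\mathbb Z})\stackrel{\cong}{\longrightarrow}R(G)$, which is exactly the assertion.

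I do not expect a genuine obstacle inside this argument, since the substantive content sits in the two cited ingredients, which I am taking as given. If anything, the only point requiring care is bookkeeping about directions: $j$ runs from the neighborhood $A$ onto the subcomplex $B$, so the induced map $j^{*}$ runs from $H^{1}(G^{(0)})$ into $H^{1}(G^{*}\setminus P(G^{*}))$, and one must check that feeding a tube class first through $j^{*}$ and then through $i_{A}^{*}$ into $R(G)$ is precisely what the symbol $i_{G^{(0)}}^{*}$ denotes. The deeper facts — that $A$ is cofinal enough among the symmetric deleted neighborhoods to compute $R(G)$, and that the retraction $j$ exists at all — are exactly what Wu \cite{wu1} supplies.
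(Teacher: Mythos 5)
Your argument is correct and is essentially the paper's own (implicit) reasoning: the paper states the two ingredients — that $i_{G^{*}\setminus P(G^{*})}^{*}$ is an isomorphism and that $j$ is a deformation retraction — and then asserts the theorem with ``Therefore,'' i.e.\ precisely by composing them as you do. Your closing bookkeeping remark is also apt, since $G^{(0)}$ is a subcomplex rather than a deleted neighborhood, so the displayed equation is really the definition of the symbol $i_{G^{(0)}}^{*}$ and the content is that the composite is an isomorphism.
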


Thus the calculation of $H^{1}(G^{(0)};{\mathbb Z})$ provides a precise method to calculate $R(G)$. To calculate $H^{1}(G^{(0)};{\mathbb Z})$, we investigate the structure of $G^{(0)}$ directly. We set $V(G)=\{v_{1},v_{2},\ldots,v_{m}\}$ and $E(G)=\{e_{1},e_{2},\ldots,e_{n}\}$. We choose a fixed orientation on each edge of $G$. We put 

\noindent
$Z_{st}^{s}=Z_{ts}^{s}=[v_{s},v_{s}*v_{t}]$ for $(v_{s},v_{t})\in E(G)$, 

\noindent
$W_{st}^{u}=W_{ts}^{u}=[v_{u},v_{s}*v_{t}]$ for $(v_{u},v_{s}),(v_{u},v_{t})\in E(G)$, $v_{s}\neq v_{t}$, 

\noindent
$X_{st}^{i}=X_{ts}^{i}=[e_{i},v_{s}*v_{t}]$ for $e_{i}=(v_{s},v_{t})\in E(G)$, and 

\noindent
$Y_{ti}^{s}=[v_{s},v_{t}*e_{i}]$ for $(v_{s},v_{t}),e_{i}\in E(G)$, $(v_{s},v_{t})\neq e_{i}$ and $v_{s}\subset e_{i}$. 

\noindent
Note that $Z_{st}^{s}$ and $W_{st}^{u}$ are $0$-dimensional simplices of $G^{(0)}$, and $X_{st}^{i}$ and $Y_{ti}^{s}$ are $1$-dimensional simplices of $G^{(0)}$. An orientation of $X_{st}^{i}$ is induced by $e_{i}$, and an orientation of $Y_{ti}^{s}$ is induced by $e_{i}$. We can consider $Z_{st}^{s}$ and $W_{st}^{u}$
as $0$-chains in $C_{0}(G^{(0)};{\mathbb Z})$ and $X_{st}^{i}$ and $Y_{ti}^{s}$ as $1$-chains in $C_{1}(G^{(0)};{\mathbb Z})$. The dual cochain of $Z_{st}^{s}$, $W_{st}^{u}$, $X_{st}^{i}$ and $Y_{ti}^{s}$ are denoted by 
$Z_{s}^{st}$, $W_{u}^{st}$, $X_{i}^{st}$ and $Y_{s}^{ti}$, respectively. It is not difficult to see the following. 

\begin{Proposition}\label{decomp2}  
For a graph $G$, a cell of symmetric tube $G^{(0)}$ is one of $Z_{st}^{s}$, $W_{st}^{u}$, $X_{st}^{i}$ and $Y_{ti}^{s}$ as above. Therefore $G^{(0)}$ is also a graph. 
\end{Proposition}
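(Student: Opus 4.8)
The plan is to argue directly from the defining data of a cell of the tube. By construction $\widetilde{G}^{(0)}$ consists of the cells $[s,s_{1}\times s_{2}]^{(0)}$ where $(s,s_{1},s_{2})$ is a triple as in Proposition \ref{decomp} (3); passing to the quotient, a cell of $G^{(0)}=\widetilde{G}^{(0)}/{\mathbb Z}_{2}$ is the orbit $[s,s_{1}*s_{2}]$, and since the ${\mathbb Z}_{2}$-action only interchanges the two factors we have $[s,s_{1}*s_{2}]=[s,s_{2}*s_{1}]$. Thus it suffices to enumerate, up to this swap, all triples $(s,s_{1},s_{2})$ of vertices and edges satisfying (i) $s_{1}\cap s_{2}=\emptyset$ and (ii) each of $s\cup s_{1}$, $s\cup s_{2}$ is contained in a single vertex or edge of $G$, and to verify that the resulting cell is $0$- or $1$-dimensional. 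First I would record the simplicial meaning of (ii): as a vertex is a single point, ``$s\cup s_{i}$ contained in a vertex'' forces $s=s_{i}$ to equal that vertex, whereas ``$s\cup s_{i}$ contained in an edge $e$'' forces both $s$ and $s_{i}$ to be faces of $e$, i.e. each lies in the set consisting of the two endpoints of $e$ together with $e$ itself. I then split into cases according to whether $s,s_{1},s_{2}$ are vertices or edges.

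The decisive step is ruling out the a priori two-dimensional configurations, and this is where I expect the only real content to lie; everything else is bookkeeping. If $s$ is an edge $e_{i}$, then (ii) forces each of $s_{1},s_{2}$ to be a face of $e_{i}$, and neither can equal $e_{i}$ because an edge meets every one of its faces, which would violate (i); hence $s_{1},s_{2}$ are the two distinct endpoints of $e_{i}$, giving $X_{st}^{i}$ with $e_{i}=(v_{s},v_{t})$. If instead $s$ is a vertex and both $s_{1},s_{2}$ are edges, then (ii) forces $s$ to be an endpoint of each, so $s\in s_{1}\cap s_{2}$, contradicting (i); this case is therefore empty. It is precisely this interplay between the disjointness condition (i) and the common-face condition (ii) that collapses the potentially $2$-dimensional cells.

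The remaining cases are read off directly. When $s,s_{1},s_{2}$ are all vertices, (i) gives $s_{1}\neq s_{2}$, and either $s$ equals one of them -- say $s=s_{1}=v_{s}$, so that (ii) applied to $v_{s}\cup v_{t}$ forces $(v_{s},v_{t})\in E(G)$, yielding $Z_{st}^{s}$ -- or $s$ is a third vertex $v_{u}$, in which case (ii) forces $(v_{u},v_{s}),(v_{u},v_{t})\in E(G)$, yielding $W_{st}^{u}$. When $s$ is a vertex and exactly one of $s_{1},s_{2}$ is an edge, I may assume by the swap that $s=v_{s}$, $s_{1}=v_{t}$, $s_{2}=e_{i}$; then (ii) forces $v_{s}\subset e_{i}$, while (i) gives $v_{t}\not\subset e_{i}$, so in particular $v_{s}\neq v_{t}$ and hence (ii) applied to $v_{s}\cup v_{t}$ gives $(v_{s},v_{t})\in E(G)$ with $e_{i}\neq(v_{s},v_{t})$, yielding $Y_{ti}^{s}$. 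This exhausts every cell. Finally, since $Z_{st}^{s}$ and $W_{st}^{u}$ are $0$-simplices and $X_{st}^{i}$ and $Y_{ti}^{s}$ are $1$-simplices, $G^{(0)}$ is a $1$-dimensional simplicial complex; moreover one checks that the boundary of each such $1$-simplex consists of cells of type $Z$ or $W$, so $G^{(0)}$ is indeed a graph.
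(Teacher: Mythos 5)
Your case analysis is correct: the paper itself gives no proof of this proposition (it is introduced with ``It is not difficult to see the following''), and your argument supplies exactly the routine enumeration that is being omitted, including the one substantive point --- that the disjointness condition $s_{1}\cap s_{2}=\emptyset$ together with the common-face condition kills the would-be $2$-cells with $s$ a vertex and $s_{1},s_{2}$ both edges. Your four surviving cases match the paper's definitions of $Z_{st}^{s}$, $W_{st}^{u}$, $X_{st}^{i}$ and $Y_{ti}^{s}$ verbatim (and are consistent with Lemma \ref{stube}), so there is nothing to correct.
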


For example, let $S_{n}$ be a graph as illustrated in Fig. \ref{startuben}. By enumerating vertices and edges of $S_{n}^{(0)}$ and observing the connection between them directly, we have the following.

\begin{Lemma}\label{stube}  
The symmetric tube $S_{n}^{(0)}$ of $S_{n}$ is a graph as follows: 
\begin{enumerate}
\item $V(S_{n}^{(0)})
=\{Z_{n+1,i}^{i}\ {\rm and}\ Z_{n+1,i}^{n+1}\ (i=1,2,\ldots,n),\ W_{jk}^{n+1}\ (1\le j<k\le n)\}$. 
\item $E(S_{n}^{(0)})=\{
X_{n+1,i}^{i}\ (i=1,2,\ldots,n),\ 
Y_{jk}^{n+1}\ {\rm and}\ Y_{kj}^{n+1}\ (1\le j<k\le n)\}$. 
\item 
$X_{n+1,i}^{i}=(Z_{n+1,i}^{n+1},Z_{n+1,i}^{i})\ (i=1,2,\ldots,n)$, 

\noindent
$Y_{jk}^{n+1}=(Z_{n+1,j}^{n+1},W_{jk}^{n+1})$ {\rm and} $Y_{kj}^{n+1}=(Z_{n+1,k}^{n+1},W_{jk}^{n+1})\ (1\le j<k\le n)$. 
\item ${\rm deg}\ Z_{n+1,i}^{i}=1$ and   
${\rm deg}\ Z_{n+1,i}^{n+1}=n\ (i=1,2,\ldots,n)$, 

\noindent
${\rm deg}\ W_{jk}^{n+1}=2\ (1\le j<k\le n)$. 
\end{enumerate}
\end{Lemma}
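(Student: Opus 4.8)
The plan is to read everything off directly from the defining formulas for $Z_{st}^{s}$, $W_{st}^{u}$, $X_{st}^{i}$, $Y_{ti}^{s}$ together with the combinatorics of $S_{n}$, using Proposition \ref{decomp2} to know that these four types exhaust the cells of $S_{n}^{(0)}$. First I would fix the description of the underlying graph: $S_{n}$ is the star with central vertex $v_{n+1}$ and leaves $v_{1},\ldots,v_{n}$, so that $E(S_{n})=\{e_{i}=(v_{n+1},v_{i})\mid i=1,\ldots,n\}$, and I orient each $e_{i}$ from $v_{n+1}$ to $v_{i}$. The structural fact I would use repeatedly is that ${\rm deg}\,v_{i}=1$ for $1\le i\le n$ while ${\rm deg}\,v_{n+1}=n$; in other words $v_{n+1}$ is the only vertex incident to more than one edge.

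For (1) and (2) I would simply run through the four types of simplices. A $Z_{st}^{s}$ requires $(v_{s},v_{t})\in E(S_{n})$, and since every edge has the form $(v_{n+1},v_{i})$ this produces exactly $Z_{n+1,i}^{n+1}$ and $Z_{n+1,i}^{i}$ for $i=1,\ldots,n$. A $W_{st}^{u}$ requires two distinct edges $(v_{u},v_{s})$ and $(v_{u},v_{t})$ sharing the vertex $v_{u}$, which forces $v_{u}=v_{n+1}$ and $v_{s},v_{t}$ to be distinct leaves, giving $W_{jk}^{n+1}$ for $1\le j<k\le n$. Likewise $X_{st}^{i}$ requires $e_{i}=(v_{s},v_{t})$, producing $X_{n+1,i}^{i}$, and $Y_{ti}^{s}$ requires $v_{s}\subset e_{i}$ with $(v_{s},v_{t})$ an edge different from $e_{i}$; since a leaf lies on only one edge, $v_{s}$ must be the center $v_{n+1}$ and $v_{t}$ a leaf different from $v_{i}$, which yields $Y_{jk}^{n+1}$ and $Y_{kj}^{n+1}$ for $1\le j<k\le n$.

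The incidences in (3) are where the actual work lies, and this is the step I expect to be the main obstacle. A one-dimensional cell $[s,s_{1}*s_{2}]$ has exactly one of its three simplices one-dimensional, namely an edge $e_{i}$, and by the join structure of Proposition \ref{decomp} its two boundary vertices are obtained by replacing that $e_{i}$ by each of its two endpoints while keeping the rest fixed. For $X_{n+1,i}^{i}=[e_{i},v_{n+1}*v_{i}]$, replacing $e_{i}$ by $v_{n+1}$ gives $[v_{n+1},v_{n+1}*v_{i}]=Z_{n+1,i}^{n+1}$ and replacing it by $v_{i}$ gives $[v_{i},v_{n+1}*v_{i}]=Z_{n+1,i}^{i}$, and the orientation induced by $e_{i}$ makes $Z_{n+1,i}^{n+1}$ the initial and $Z_{n+1,i}^{i}$ the terminal vertex. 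For $Y_{jk}^{n+1}=[v_{n+1},v_{j}*e_{k}]$, replacing $e_{k}$ by its endpoint $v_{n+1}$ gives $[v_{n+1},v_{j}*v_{n+1}]=Z_{n+1,j}^{n+1}$ and replacing it by $v_{k}$ gives $[v_{n+1},v_{j}*v_{k}]=W_{jk}^{n+1}$, exactly the asserted boundary; the same computation applied to $Y_{kj}^{n+1}$ gives the pair $Z_{n+1,k}^{n+1}$ and $W_{jk}^{n+1}$. Care must be taken with the symmetry conventions $Z_{st}^{s}=Z_{ts}^{s}$ and $W_{st}^{u}=W_{ts}^{u}$ so that each cell is counted exactly once and each degeneration is matched to the correctly labelled vertex, and one must check that the resulting $0$-cells genuinely satisfy the condition of Proposition \ref{decomp} (3).

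Finally (4) is a bookkeeping consequence of (3). The vertex $Z_{n+1,i}^{i}$ appears only as an endpoint of $X_{n+1,i}^{i}$, so it has degree $1$; the vertex $Z_{n+1,i}^{n+1}$ is an endpoint of $X_{n+1,i}^{i}$ together with the edges $Y_{ik}^{n+1}=[v_{n+1},v_{i}*e_{k}]$ for each of the $n-1$ values $k\ne i$, giving degree $n$; and $W_{jk}^{n+1}$ is an endpoint of precisely the two edges $Y_{jk}^{n+1}$ and $Y_{kj}^{n+1}$, giving degree $2$. Collecting these statements yields the lemma.
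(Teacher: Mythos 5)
Your proposal is correct and takes exactly the route the paper does: the paper gives no argument beyond the remark that the lemma follows ``by enumerating vertices and edges of $S_{n}^{(0)}$ and observing the connection between them directly,'' and your write-up is precisely that enumeration, with the degree data of the star, the boundary computation for the join cells, and the symmetry conventions $Z_{st}^{s}=Z_{ts}^{s}$, $W_{st}^{u}=W_{ts}^{u}$ all handled correctly.
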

\begin{figure}[htbp]
      \begin{center}
\scalebox{0.45}{\includegraphics*{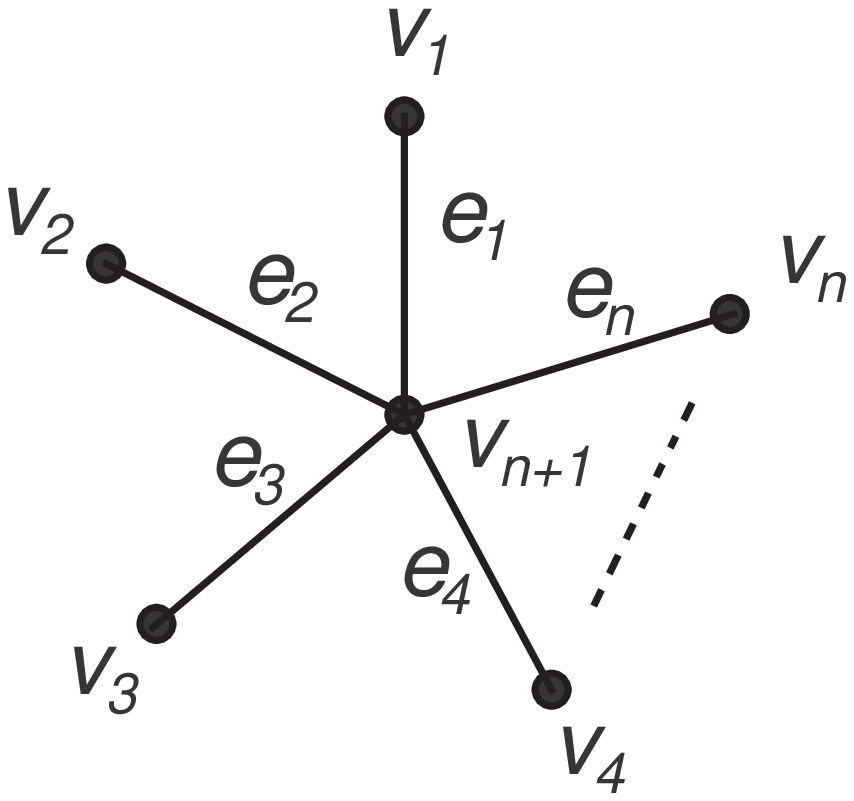}}
      \end{center}
   \caption{}
  \label{startuben}
\end{figure}
\begin{Example}\label{stube_ex}  
{\rm Fig. \ref{c_tube_ex} illustrates the symmetric tube $S_{n}^{(0)}$ of $S_{n}$ for $n=1,2$ and $3$. We can see that the subgraph $H$ of $S_{n}^{(0)}$ defined by 
\begin{eqnarray*}
V(H)&=&\{Z_{n+1,i}^{n+1}~(i=1,2,\ldots,n), 
\ W_{jk}^{n+1}\ (1\le j<k\le n)\},\\
E(H)&=&\{Y_{jk}^{n+1}, Y_{kj}^{n+1}\ (1\le j<k\le n)\}
\end{eqnarray*} 
is homeomorphic to $K_{n}$. Precisely speaking, $H$ is isomorphic to the graph which is obtained from $K_{n}$ by subdividing each edge of $K_{n}$ once. 

\begin{figure}[htbp]
      \begin{center}
\scalebox{0.375}{\includegraphics*{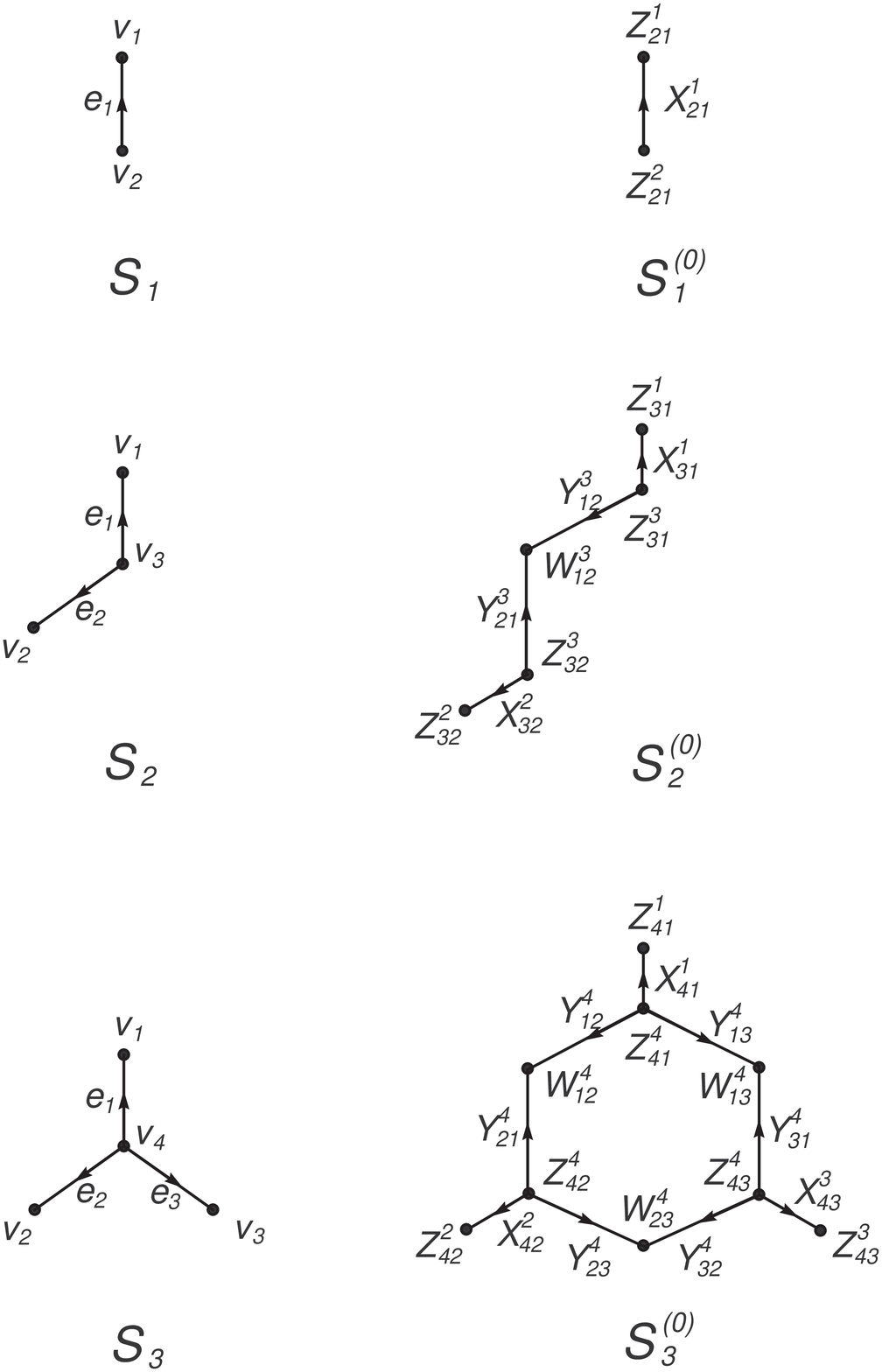}}
      \end{center}
   \caption{}
  \label{c_tube_ex}
\end{figure}
}
\end{Example}

By Lemma \ref{stube} and Example \ref{stube_ex}, we can see that the symmetric tube $G^{(0)}$ of a graph $G$ is obtained from $G$ by ``substituting'' $K_{d}$ for each vertex $v_{s}$ of $G$ as follows, where $d={\rm deg}v_{s}$. Also see Examples \ref{circle} and \ref{k4}.

\begin{Lemma}\label{main_lemma}  
Let $G$ be a graph. For a vertex $v_{s}$ of $G$, let $v_{s_{1}},v_{s_{2}},\ldots,v_{s_{d}}$ be all vertices connected to $v_{s}$ so that $e_{i_{l}}=(v_{s},v_{s_{l}})$ $(l=1,2,\ldots, d,\ 1\le i_{j}<i_{k}\le d\ {\rm for}\ j<k)$. Then $G^{(0)}$ is obtained from $G$ by replacing each $v_{s}$ with the graph $H_{s}$, which is homeomorphic to $K_{d}$ defined by 
\begin{eqnarray*}
V(H_{s})&=&\{Z_{ss_{l}}^{s}~(i=1,2,\ldots,d), 
\ W_{s_{j}s_{k}}^{s}\ (1\le j<k\le d)\},\\
E(H_{s})&=&\{Y_{s_{j}i_{k}}^{s}=(Z_{s s_{j}}^{s},W_{s_{j} s_{k}}^{s}), Y_{s_{k}i_{j}}^{s}=(Z_{s s_{k}}^{s},W_{s_{j} s_{k}}^{s})\ (1\le j<k\le d)\},
\end{eqnarray*} 
and by replacing $e_{i_{l}}$ with $X_{s s_{l}}^{i_{l}}=(Z_{s s_{l}}^{s_{l}},Z_{s s_{l}}^{s})\ (l=1,2,\ldots,d)$. 
\end{Lemma}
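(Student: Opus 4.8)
The plan is to read the full cell structure of $G^{(0)}$ off Proposition \ref{decomp2}, determine the incidences between the $0$-cells $Z,W$ and the $1$-cells $X,Y$, and then reorganize the resulting graph into the asserted vertex-substitution form. By Proposition \ref{decomp2} every cell of $G^{(0)}$ is one of $Z_{st}^{s}$, $W_{st}^{u}$, $X_{st}^{i}$, $Y_{ti}^{s}$, so nothing remains to be found; the entire content is the boundary relations together with recognizing the combinatorics. I would first classify the cells by their join base (the repeated simplex $\tilde d(\cdot)$, equivalently the superscript): the cells $Z^{s}_{st}$, $W^{s}_{t_{1}t_{2}}$, $Y^{s}_{ti}$ all have base $v_{s}$, while $X^{i}_{st}$ has base the edge $e_{i}$.

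The key structural point I would exploit is \emph{locality}. Inspecting the defining conditions in Proposition \ref{decomp}, each cell with base $v_{s}$ is built only from $v_{s}$, its neighbours, and the edges incident to $v_{s}$, i.e.\ from the closed star ${\rm St}(v_{s})$, which is isomorphic to the star graph $S_{d}$ with centre $v_{s}$ and $d={\rm deg}\,v_{s}$. In particular the conditions defining $Z^{s}$, $W^{s}$, $Y^{s}$ never refer to edges of $G$ joining two neighbours of $v_{s}$: $W^{s}_{t_{1}t_{2}}=[v_{s},v_{t_{1}}*v_{t_{2}}]$ requires only $(v_{s},v_{t_{1}}),(v_{s},v_{t_{2}})\in E(G)$, and $Y^{s}_{ti}=[v_{s},v_{t}*e_{i}]$ only that $v_{t}$ be a neighbour and $e_{i}$ an incident edge. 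Hence the subcomplex of $G^{(0)}$ spanned by the base-$v_{s}$ cells, together with their incidences, is canonically isomorphic to the corresponding subcomplex of $S_{d}^{(0)}$. Applying Lemma \ref{stube} and Example \ref{stube_ex} to this local copy of $S_{d}$ then identifies this subcomplex with the graph $H_{s}$, produces $V(H_{s})$ and $E(H_{s})$, the incidences $Y^{s}_{s_{j}i_{k}}=(Z^{s}_{ss_{j}},W^{s}_{s_{j}s_{k}})$ and $Y^{s}_{s_{k}i_{j}}=(Z^{s}_{ss_{k}},W^{s}_{s_{j}s_{k}})$, and the fact that $H_{s}$ is the once-subdivided $K_{d}$.

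It remains to glue the local pieces $H_{s}$ along the edge cells. For each edge $e_{i_{l}}=(v_{s},v_{s_{l}})$ there is exactly one base-$e_{i_{l}}$ cell, namely $X^{i_{l}}_{ss_{l}}=[e_{i_{l}},v_{s}*v_{s_{l}}]$, and I would compute its boundary directly from the join picture: $X^{i_{l}}_{ss_{l}}$ is the midpoint slice of $\tilde d(e_{i_{l}})\circ(v_{s}\times v_{s_{l}})$, so sliding along the $1$-dimensional factor $e_{i_{l}}$ and specialising to its two endpoints yields the $0$-cells $[v_{s},v_{s}*v_{s_{l}}]=Z^{s}_{ss_{l}}$ and $[v_{s_{l}},v_{s}*v_{s_{l}}]=Z^{s_{l}}_{ss_{l}}$, with the orientation induced by $e_{i_{l}}$; thus $X^{i_{l}}_{ss_{l}}=(Z^{s_{l}}_{ss_{l}},Z^{s}_{ss_{l}})$. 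Since $Z^{s}_{ss_{l}}\in H_{s}$ and $Z^{s_{l}}_{ss_{l}}\in H_{s_{l}}$, and since the $H_{s}$ are pairwise disjoint with the $X$-cells their only mutual incidences, $G^{(0)}$ is precisely $G$ with each $v_{s}$ replaced by $H_{s}$ and each $e_{i_{l}}$ replaced by $X^{i_{l}}_{ss_{l}}$, as claimed. The main obstacle is the locality step: one must check against Proposition \ref{decomp} that the base-$v_{s}$ cells and all their incidences really are insensitive to the rest of $G$ and reproduce exactly the centre part of $S_{d}^{(0)}$. The boundary computation for $X$ and the final assembly are then routine bookkeeping, the only care needed being the orientations induced by the edges $e_{i_{l}}$.
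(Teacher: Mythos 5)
Your proposal is correct and follows essentially the route the paper intends: the paper states this lemma without a detailed proof, prefacing it only with ``By Lemma \ref{stube} and Example \ref{stube_ex}, we can see that\dots'', i.e.\ exactly your reduction via Proposition \ref{decomp2} and locality of the base-$v_{s}$ cells to the star-graph computation of Lemma \ref{stube}, followed by gluing along the $X$-cells. Your write-up simply makes explicit the locality check and the boundary computation for $X^{i_{l}}_{ss_{l}}$ that the paper leaves to the reader.
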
 

By the universal coefficient theorem, it is sufficient to know $H_{1}(G^{(0)};{\mathbb Z})$ to calculate $H^{1}(G^{(0)};{\mathbb Z})$. So in the following we construct a spanning tree of $G^{(0)}$. First we define  a subgraph $T_{S_{n}^{(0)}}$ of $S_{n}^{(0)}$ as follows. We set $V(T_{S_{n}^{(0)}})=V(S_{n}^{(0)})$ and 
\begin{eqnarray*}
E(T_{S_{n}^{(0)}})&=&\{X_{n+1,i}^{i}\ (i=1,2,\ldots,n),\ Y_{nj}^{n+1}\ (j=1,2,\ldots,n-1), \\
&&Y_{tn}^{n+1}\ (t=1,2,\ldots,n-1),\ Y_{kl}^{n+1}\ (1\le k<l\le n-1)\}. 
\end{eqnarray*}
We note that 
\begin{eqnarray*}
E(S_{n}^{(0)}) \setminus E(T_{S_{n}^{(0)}})=\{Y_{lk}^{n+1}~(1\le k<l\le n-1))\}.
\end{eqnarray*} 
Then we easily have the following. 

\begin{Lemma}\label{sptree1}  
A subgraph $T_{S_{n}^{(0)}}$ is a spanning tree of $S_{n}^{(0)}$. 
\end{Lemma}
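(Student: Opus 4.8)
The plan is to verify the three defining properties of a spanning tree: that $T_{S_{n}^{(0)}}$ contains every vertex of $S_{n}^{(0)}$, that it is connected, and that it has no cycle. The first is immediate from the definition $V(T_{S_{n}^{(0)}})=V(S_{n}^{(0)})$. For the other two I would invoke the elementary fact that, for a subgraph on a prescribed vertex set, any two of the conditions ``connected'', ``acyclic'' and ``$\#E=\#V-1$'' force the third; so it suffices to establish connectivity together with the correct edge count.

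First I would count. By Lemma \ref{stube}, $S_{n}^{(0)}$ has $2n+\binom{n}{2}$ vertices and $n+n(n-1)$ edges. Summing the four families listed in $E(T_{S_{n}^{(0)}})$ gives
\begin{eqnarray*}
n+(n-1)+(n-1)+\binom{n-1}{2}=3n-2+\frac{(n-1)(n-2)}{2},
\end{eqnarray*}
and a one-line computation shows this equals $2n+\binom{n}{2}-1$, the number of edges required of a spanning tree of $S_{n}^{(0)}$.

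Next I would prove connectivity by exhibiting, for each vertex, an explicit path in $T_{S_{n}^{(0)}}$ to the fixed vertex $Z_{n+1,n}^{n+1}$. By the incidences in Lemma \ref{stube} (3), the edge $Y_{nj}^{n+1}$ joins $Z_{n+1,n}^{n+1}$ to $W_{jn}^{n+1}$ and the edge $Y_{jn}^{n+1}$ joins $W_{jn}^{n+1}$ to $Z_{n+1,j}^{n+1}$; since both lie in $T_{S_{n}^{(0)}}$, each hub vertex $Z_{n+1,j}^{n+1}$ and each subdivision vertex $W_{jn}^{n+1}$ ($1\le j\le n-1$) is reached from $Z_{n+1,n}^{n+1}$ by a path of length two. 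Every remaining subdivision vertex $W_{kl}^{n+1}$ ($1\le k<l\le n-1$) is joined to the already-connected hub $Z_{n+1,k}^{n+1}$ by $Y_{kl}^{n+1}\in E(T_{S_{n}^{(0)}})$, and every pendant vertex $Z_{n+1,i}^{i}$ is joined to $Z_{n+1,i}^{n+1}$ by $X_{n+1,i}^{i}\in E(T_{S_{n}^{(0)}})$. Hence every vertex lies in the component of $Z_{n+1,n}^{n+1}$, so $T_{S_{n}^{(0)}}$ is connected.

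I do not expect a genuine obstacle: the whole argument is bookkeeping against Lemma \ref{stube}. The one point that needs care is the index convention for the $Y$-edges, namely that $Y_{ab}^{n+1}$ joins the hub $Z_{n+1,a}^{n+1}$ to the subdivision vertex of the edge $\{a,b\}$. Keeping this straight is what lets one see that exactly one of the two halves $Y_{kl}^{n+1},Y_{lk}^{n+1}$ of each subdivided edge $\{k,l\}$ with $k,l\le n-1$ is discarded---precisely the set $E(S_{n}^{(0)})\setminus E(T_{S_{n}^{(0)}})$ recorded just before the lemma---while both halves of every edge incident to $v_{n}$ are retained. With connectivity and the matching edge count established, $T_{S_{n}^{(0)}}$ is a tree spanning $S_{n}^{(0)}$, as claimed.
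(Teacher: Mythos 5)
Your proof is correct, and it supplies exactly the routine verification that the paper omits (the lemma is introduced there with ``Then we easily have the following'' and no proof is given). The edge count $3n-2+\tbinom{n-1}{2}=2n+\tbinom{n}{2}-1$ checks out, the incidences you use agree with Lemma \ref{stube} (3), and connectivity plus the correct edge count is a legitimate way to conclude that the subgraph is a spanning tree.
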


Now we construct a spanning tree of $G^{(0)}$ on the outcome of Lemma \ref{sptree1}. For $v_{s}\in V(G)$, let ${\rm st}(v_{s})$ be a subgraph of $G$ consisting of $v_{s}$ and all edges incident to $v_{s}$. Let $v_{s_{1}},v_{s_{2}},\ldots,v_{s_{d}}$ be all vertices connected to $v_{s}$ so that $e_{i_{l}}=(v_{s},v_{s_{l}})$ $(l=1,2,\ldots, d,\ 1\le i_{j}<i_{k}\le d\ {\rm for}\ j<k)$, where $d={\rm deg}v_{s}$. We construct a spanning tree $T_{{\rm st}(v_{s})}$ of ${\rm st}(v_{s})$ in the same way as $S_{n}^{(0)}$. Namely, $V(T_{{\rm st}(v_{s})})=V({\rm st}(v_{s}))$ and 
\begin{eqnarray*}
E(T_{{\rm st}(v_{s})})&=&\{X_{s s_{l}}^{i_{l}}\ (l=1,2,\ldots,d),\ Y_{s_{d}i_{j}}^{s}\ (j=1,2,\ldots,d-1),\\
&&Y_{s_{j}i_{d}}^{s}\ (j=1,2,\ldots,d-1),\ Y_{s_{j}i_{k}}^{s}\ (1\le j<k\le d-1)\}.
\end{eqnarray*}
Let $T_{G}$ be a spanning tree of $G$. We define a subgraph $T_{G^{(0)}}$ of $G^{(0)}$ by $V(T_{G^{(0)}})=V(G^{(0)})$ and 
\begin{eqnarray*}
E(T_{G^{(0)}})&=&
\{X_{j_{1}j_{2}}^{j}\ |\ e_{j}=(v_{j_{1}},v_{j_{2}})\in
E(T_{G})\}\\
&&\cup
\bigcup_{v_{s}\in V(G)}\left(E(T_{{\rm st}(v_{s})})\setminus \{X_{s s_{l}}^{i_{l}}\ (l=1,2,\ldots,d)\}\right).
\end{eqnarray*}
Then we have the following. 

\begin{Lemma}\label{sptree2}  
A subgraph $T_{G^{(0)}}$ is a spanning tree of $G^{(0)}$. 
\end{Lemma}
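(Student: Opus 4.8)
The plan is to regard $G^{(0)}$, via Lemma \ref{main_lemma}, as a union of ``blocks'' $H_{s}$ (one for each $v_{s}\in V(G)$, each homeomorphic to $K_{{\rm deg}\,v_{s}}$) glued together by the ``bridge'' edges $X_{s s_{l}}^{i_{l}}=(Z_{s s_{l}}^{s_{l}},Z_{s s_{l}}^{s})$, one bridge for each edge of $G$. Under this description $T_{G^{(0)}}$ splits into two kinds of edges: the bridges $X_{j_{1}j_{2}}^{j}$ coming from the edges of the fixed spanning tree $T_{G}$ of $G$, and, inside each block, the $Y$-edges $E(T_{{\rm st}(v_{s})})\setminus\{X_{s s_{l}}^{i_{l}}\}$. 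I would prove the lemma by showing that $T_{G^{(0)}}$ is connected and that its number of edges equals $\abs{V(G^{(0)})}-1$; since $V(T_{G^{(0)}})=V(G^{(0)})$ by definition, these two facts force $T_{G^{(0)}}$ to be a spanning tree.

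First I would establish the ``within a block'' statement: for each $v_{s}$, the $Y$-edges in $E(T_{{\rm st}(v_{s})})$ form a spanning tree of $H_{s}$. The star ${\rm st}(v_{s})$ is isomorphic to $S_{d}$ with $d={\rm deg}\,v_{s}$, so ${\rm st}(v_{s})^{(0)}$ is described by Lemma \ref{stube} and $T_{{\rm st}(v_{s})}$ is a spanning tree of it by Lemma \ref{sptree1}. In ${\rm st}(v_{s})^{(0)}$ the block $H_{s}$ carries, at each corner $Z_{s s_{l}}^{s}$, a pendant bridge $X_{s s_{l}}^{i_{l}}$ ending in the degree-one vertex $Z_{s s_{l}}^{s_{l}}$. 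Deleting these $d$ pendant bridges from the spanning tree $T_{{\rm st}(v_{s})}$ removes exactly the $d$ leaves $Z_{s s_{l}}^{s_{l}}$ and leaves precisely the $Y$-edges, which therefore span $H_{s}$ and contain no cycle; hence they constitute a spanning tree of $H_{s}$.

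Next I would assemble the global picture. The far endpoint $Z_{s s_{l}}^{s_{l}}=Z_{s_{l}s}^{s_{l}}$ of the bridge $X_{s s_{l}}^{i_{l}}$ is a corner of the neighbouring block $H_{s_{l}}$, so each bridge genuinely joins two distinct blocks. Since each $H_{s}$ is already connected inside $T_{G^{(0)}}$ by its $Y$-spanning tree, and since the bridges used are exactly those indexed by the edges of the connected spanning tree $T_{G}$, any two vertices of $G^{(0)}$ can be joined by a path that alternately travels inside blocks and crosses bridges; thus $T_{G^{(0)}}$ is connected. For the edge count, summing the within-block spanning-tree sizes $\abs{V(H_{s})}-1={\rm deg}\,v_{s}+\binom{{\rm deg}\,v_{s}}{2}-1$ over all $v_{s}$ and adding the $\abs{E(T_{G})}=m-1$ bridges gives, using $\sum_{s}{\rm deg}\,v_{s}=2\abs{E(G)}$, the total $2\abs{E(G)}+\sum_{s}\binom{{\rm deg}\,v_{s}}{2}-1$, which is exactly $\abs{V(G^{(0)})}-1$ by the vertex description furnished by Lemma \ref{main_lemma} (and Lemma \ref{stube}). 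A connected graph with one fewer edge than vertices is a tree, completing the proof.

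The only genuinely delicate point, and the one I would be most careful about, is the identification $Z_{s s_{l}}^{s_{l}}=Z_{s_{l}s}^{s_{l}}$ showing that the two ends of a bridge sit in two different blocks: this is what turns the purely local spanning trees of the stars into a single global tree, and it is what makes the number of bridges used equal to $m-1$ rather than $\abs{E(G)}$. An equivalent and perhaps cleaner packaging, which I would adopt if the bookkeeping became awkward, is to contract each within-block spanning tree to a point; the resulting quotient of $G^{(0)}$ is isomorphic to $G$ with the bridges as its edges, the images of the chosen bridges are exactly $T_{G}$, and acyclicity of $T_{G^{(0)}}$ then follows because contracting forests preserves the forest property while $T_{G}$ is acyclic.
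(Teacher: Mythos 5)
Your proof is correct; the paper offers no argument for this lemma at all (it is stated as an immediate consequence of the construction), and your block-plus-bridge decomposition --- the $Y$-edges of $E(T_{{\rm st}(v_{s})})$ forming a spanning tree of each block $H_{s}$ by Lemma \ref{sptree1}, glued along the bridges indexed by $E(T_{G})$, and verified by connectivity together with the edge count $\abs{E(T_{G^{(0)}})}=\abs{V(G^{(0)})}-1$ --- is precisely the argument the construction via Lemmas \ref{stube}--\ref{main_lemma} is designed to make routine. The one delicate point you single out, namely that the two endpoints $Z_{ss_{l}}^{s}$ and $Z_{ss_{l}}^{s_{l}}=Z_{s_{l}s}^{s_{l}}$ of a bridge lie in the two distinct blocks $H_{s}$ and $H_{s_{l}}$ so that exactly $m-1$ bridges are used, is handled correctly, and your closing contraction remark gives a clean alternative packaging of the acyclicity.
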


We note that 
\begin{eqnarray*}
E({\rm st}(v_{s})) \setminus E(T_{{\rm st}(v_{s})})
=\{Y_{s_{j}i_{k}}^{s}\ (1\le j<k\le d-1)\}
\end{eqnarray*}
for $v_{s}\in V(G)$ 
and this set is empty for $d=1,2$. Therefore we have that 
\begin{eqnarray*}
E(G^{(0)}) \setminus E(T_{G^{(0)}})&=&
\{X_{j_{1}j_{2}}^{j}\ |\ e_{j}=(v_{j_{1}},v_{j_{2}})\in E(G) \setminus E(T_{G})\}\\
&&\cup
\bigcup_{\mathop{{v_{s}\in V(G)}\atop{{\rm deg}v_{s}\ge 3}}}
\{Y_{s_{j}i_{k}}^{s}\ (1\le j<k\le d-1)\}.
\end{eqnarray*}
Thus we can determine a structure of $H^{1}(G^{(0)};{\mathbb Z})$ completely as follows. 

\begin{Theorem}\label{present} 
\begin{enumerate}
\item $\displaystyle H^{1}(G^{(0)};{\mathbb Z})\ {\cong}\bigoplus_{\mathop{{e_{j}\in E(G) \setminus E(T_{G})}\atop{e_{j}=(v_{j_{1}},v_{j_{2}})}}}\left<X_{j}^{j_{1}j_{2}}\right>\oplus \bigoplus_{\mathop{{v_{s}\in V(G)}\atop{{\rm deg}v_{s}\ge 3}}} \left(\bigoplus_{1\le j<k\le d-1} \left<Y_{s}^{s_{j}i_{k}}\right>\right)$,
\item $\displaystyle {\rm rank}\ H^{1}(G^{(0)};{\mathbb Z})=1-2n+\frac{1}{2}\sum_{s=1}^{m}({\rm deg}v_{s})^{2}$.   
\end{enumerate}
\end{Theorem}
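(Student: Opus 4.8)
The plan is to read off Theorem \ref{present} as an essentially formal consequence of the explicit spanning tree $T_{G^{(0)}}$ produced in Lemma \ref{sptree2}, combined with the standard cohomology theory of a finite connected graph. Both parts reduce to bookkeeping once the non-tree edges have been identified, which the computation immediately preceding the statement already carries out.

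For part (1), I would first observe that $G^{(0)}$ is a finite graph by Proposition \ref{decomp2}, and that the existence of the spanning tree $T_{G^{(0)}}$ (Lemma \ref{sptree2}) shows $G^{(0)}$ is connected, so $H^{1}(G^{(0)};\mathbb{Z})$ is free abelian. I would then invoke the elementary fact that, for a connected graph with a fixed spanning tree $T$, the cohomology classes of the dual cochains of the edges in $E(G^{(0)})\setminus E(T)$ form a free basis of $H^{1}$. The justification is the fundamental-cycle pairing: by the universal coefficient theorem $H^{1}\cong \mathrm{Hom}(H_{1}(G^{(0)};\mathbb{Z}),\mathbb{Z})$, and each non-tree edge $e$ gives a fundamental cycle $z_{e}$ consisting of $e$ together with the unique arc in $T$ joining its endpoints; since $z_{e}$ contains no other non-tree edge, evaluating the dual cochain of a non-tree edge $e'$ on $z_{e}$ yields $\delta_{ee'}$, so these classes map to the dual basis of $\{z_{e}\}$. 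Substituting the explicit list $E(G^{(0)})\setminus E(T_{G^{(0)}})$ computed just before the statement — the cochains $X_{j}^{j_{1}j_{2}}$ for $e_{j}\in E(G)\setminus E(T_{G})$ and $Y_{s}^{s_{j}i_{k}}$ for $\deg v_{s}\ge 3$, $1\le j<k\le d-1$ — then gives the claimed direct sum decomposition.

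For part (2) there are two equivalent routes. The quickest simply counts the basis elements from part (1): there are $n-(m-1)=n-m+1$ generators of type $X$ (a spanning tree of the connected graph $G$ has $m-1$ edges), and $\sum_{\deg v_{s}\ge 3}\binom{\deg v_{s}-1}{2}$ generators of type $Y$. Expanding $\binom{d-1}{2}=\tfrac{1}{2}(d^{2}-3d+2)$, summing over all vertices (the terms with $\deg v_{s}\le 2$ vanish), and applying the handshake identity $\sum_{s}\deg v_{s}=2n$ collapses the total to $1-2n+\tfrac{1}{2}\sum_{s}(\deg v_{s})^{2}$. Alternatively, one may compute the first Betti number as $|E(G^{(0)})|-|V(G^{(0)})|+1$, reading the vertex and edge counts off the substitution description of Lemma \ref{main_lemma} (there are $2n$ vertices of type $Z$ and $\binom{\deg v_{s}}{2}$ of type $W$ at each $v_{s}$, and $n$ edges of type $X$ together with $\deg v_{s}(\deg v_{s}-1)$ of type $Y$ at each $v_{s}$); the same handshake identity produces the identical value.

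Since Lemmas \ref{main_lemma}, \ref{sptree1} and \ref{sptree2} have already done the geometric work, I do not expect a genuine obstacle. The two points demanding care are purely formal: verifying that the listed dual cochains form an honest \emph{basis} of $H^{1}$ rather than merely a spanning set, for which the fundamental-cycle pairing argument above is decisive, and the accurate degree-sum bookkeeping, where $\sum_{s}\deg v_{s}=2n$ must be applied to reconcile the counting form of the answer in part (2) with the Euler-characteristic form.
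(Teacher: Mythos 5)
Your proposal is correct and follows essentially the same route as the paper: part (1) is exactly the standard fact that the dual cochains of the non-tree edges of the spanning tree $T_{G^{(0)}}$ from Lemma \ref{sptree2} give a free basis of $H^{1}$ (the paper simply declares this ``clear,'' while you supply the fundamental-cycle pairing justification), and your first route for part (2) is precisely the paper's computation, namely $n-m+1$ generators of type $X$ plus $\sum_{s}\binom{\deg v_{s}-1}{2}$ of type $Y$, collapsed via $\sum_{s}\deg v_{s}=2n$. No gaps.
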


\begin{proof}
(1) is clear. We show (2). We have that 
\begin{eqnarray*}
{\rm rank}~H^{1}(G^{(0)};{\mathbb Z})
&=&{\rm rank}\ H^{1}(G;{\mathbb Z})
+\sum_{s=1}^{m}\frac{1}{2}({\rm deg}v_{s}-1)({\rm deg}v_{s}-2)\\
&=&n-m+1
+\frac{1}{2}\sum_{s=1}^{m}\left\{({\rm deg}v_{s})^{2}
-3{\rm deg}v_{s}+2\right\}\\
&=&n-m+1
+\frac{1}{2}\sum_{s=1}^{m}({\rm deg}v_{s})^{2}
-\frac{3}{2}\sum_{s=1}^{m}{\rm deg}v_{s}
+m\\
&=&n+1
+\frac{1}{2}\sum_{s=1}^{m}({\rm deg}v_{s})^{2}
-3n\\
&=&1-2n+\frac{1}{2}\sum_{s=1}^{m}({\rm deg}v_{s})^{2}. 
\end{eqnarray*}
This completes the proof. 
\end{proof}

\begin{Example}\label{circle}
{\rm Let $K_{3}$ be the complete graph on three vertices as illustrated in the left side of Fig. \ref{circletube}. As we saw in Example \ref{cell_dec_ex}, the symmetric tube $K_{3}^{(0)}$ is a graph as illustrated in Fig. \ref{circletube}. For a spanning tree $T_{K_{3}}=e_{1}\cup e_{2}$ of $K_{3}$, by Theorem \ref{present} we have that $H^{1}({K}_{3}^{(0)};{\mathbb Z})=\left<X_{1}^{12}\right>\cong {\mathbb Z}$. We note that if $[x,y,1/2]\in {K}_{3}^{(0)}$ rotates once around the one in the direction induced by the orientation of $X_{12}^{1}$ then the non-ordered pair $(x,y)$ rotates once around $K_{3}$; see Fig. \ref{tanvec1}. Here the initial and terminal points of a vector in Fig. \ref{tanvec1} correspond to $x$ and $y$, respectively. This shows that ${\mathcal R}(f)$ of a plane curve $f$ coincides with the rotation number of $f$. 
}
\end{Example}
\begin{figure}[htbp]
      \begin{center}
\scalebox{0.35}{\includegraphics*{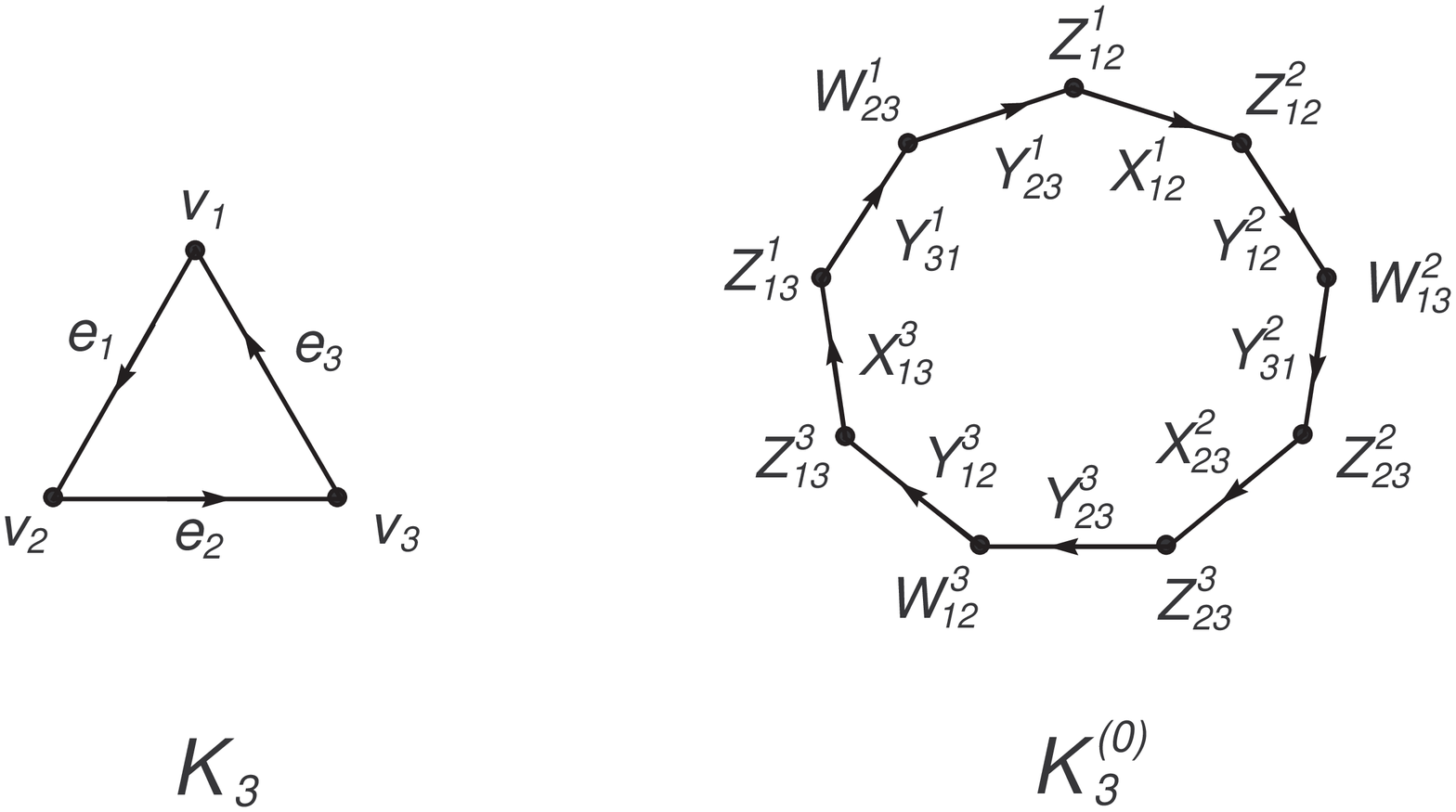}}
      \end{center}
   \caption{}
  \label{circletube}
\end{figure}
\begin{figure}[htbp]
      \begin{center}
\scalebox{0.325}{\includegraphics*{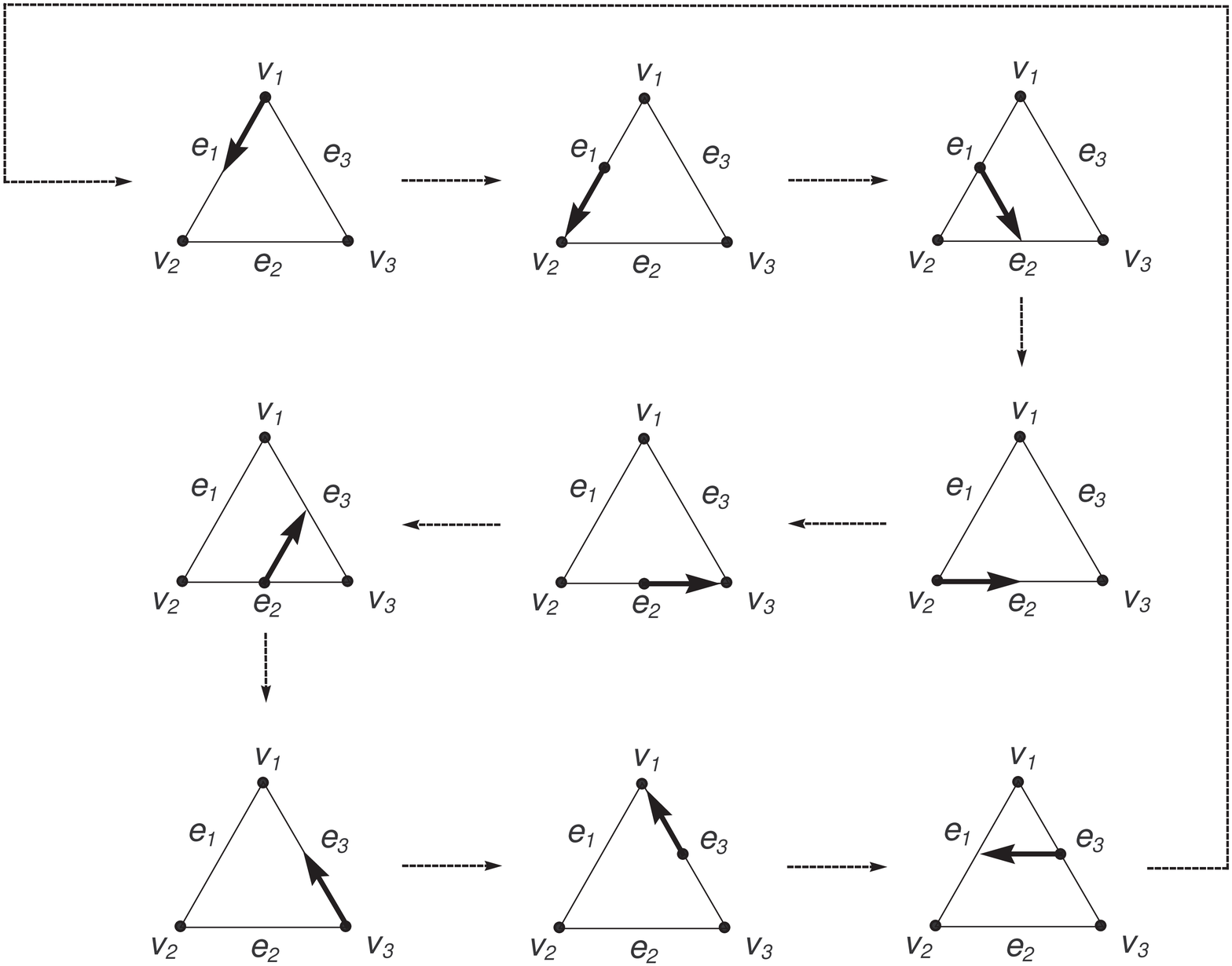}}
      \end{center}
   \caption{}
  \label{tanvec1}
\end{figure}
\begin{Example}\label{star}
{\rm For $S_{3}$ and its symmetric tube $S_{3}^{(0)}$ as illustrated in Fig. \ref{c_tube_ex}, by Theorem \ref{present} we have that $H^{1}({S}_{3}^{(0)};{\mathbb Z})=\left<Y_{4}^{21}\right>\cong {\mathbb Z}$. We note that if $[x,y,1/2]\in S_{3}^{(0)}$ rotates once around the cycle represented by $Y_{21}^{4}$ in the direction induced by the orientation of the one then the non-ordered pair $(x,y)$ rotates once around $v_{4}$; see Fig. \ref{tanvec2}. Here the initial and terminal points of a vector in Fig. \ref{tanvec2} correspond to $x$ and $y$, respectively.

Let $f$ be a generic plane immersion of $S_{3}$. Then there exists a neighbourhood $U$ of $v_{4}$ such that $f{|}_{U}$ is an embedding. We can see that ${\mathcal R}(f)=1$ if $f{|}_{U}(e_{1}\cap U)$, $f{|}_{U}(e_{2}\cap U)$ and $f{|}_{U}(e_{3}\cap U)$ are embedded in ${\mathbb R}^{2}$ as illustrated in Fig. \ref{vrotate} (1), and ${\mathcal R}(f)=-1$ if $f{|}_{U}(e_{1}\cap U)$, $f{|}_{U}(e_{2}\cap U)$ and $f{|}_{U}(e_{3}\cap U)$ are embedded in ${\mathbb R}^{2}$ as illustrated in Fig. \ref{vrotate} (2). 
}
\end{Example}
\begin{figure}[htbp]
      \begin{center}
\scalebox{0.325}{\includegraphics*{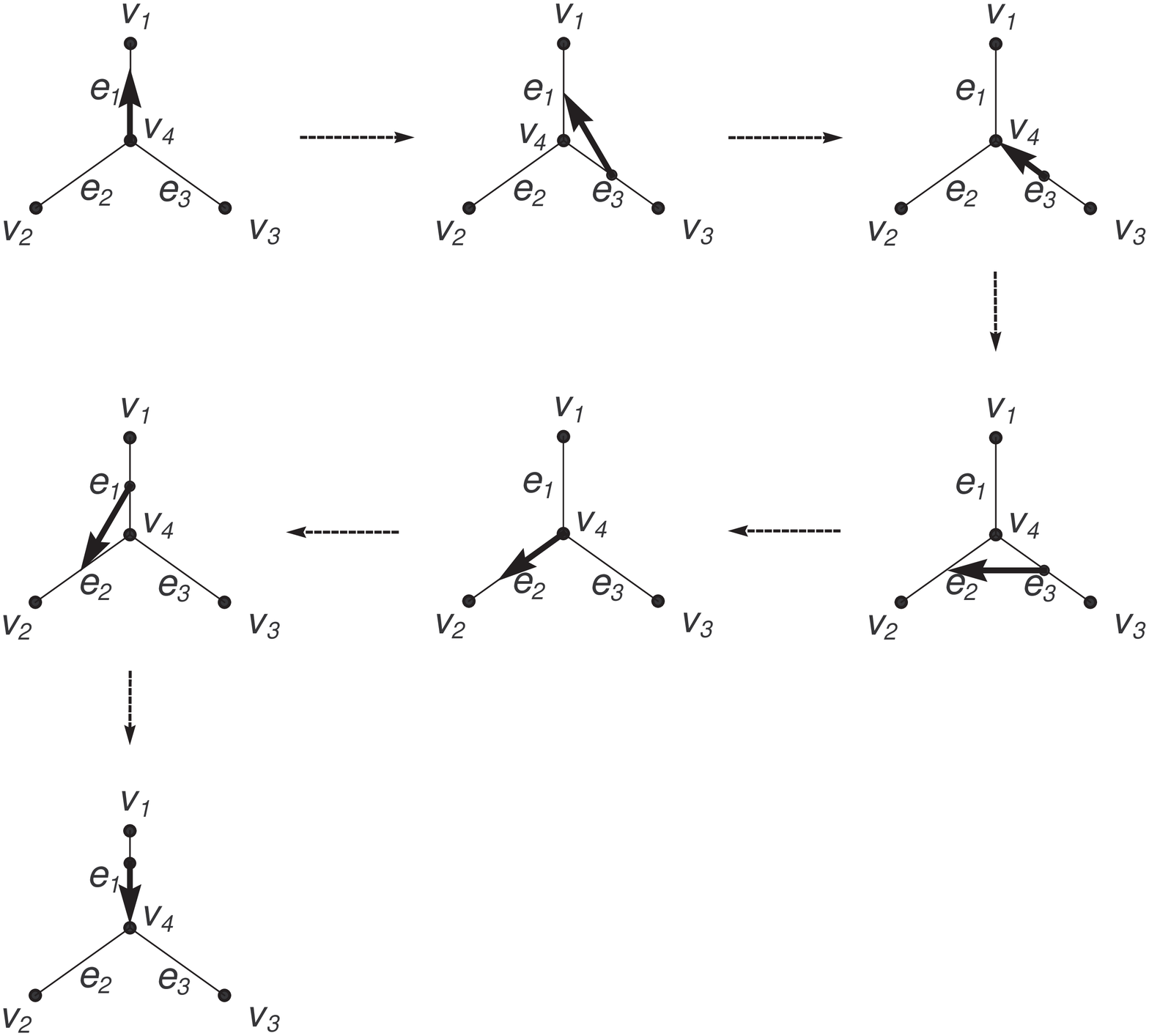}}
      \end{center}
   \caption{}
  \label{tanvec2}
\end{figure}
\begin{figure}[htbp]
      \begin{center}
\scalebox{0.35}{\includegraphics*{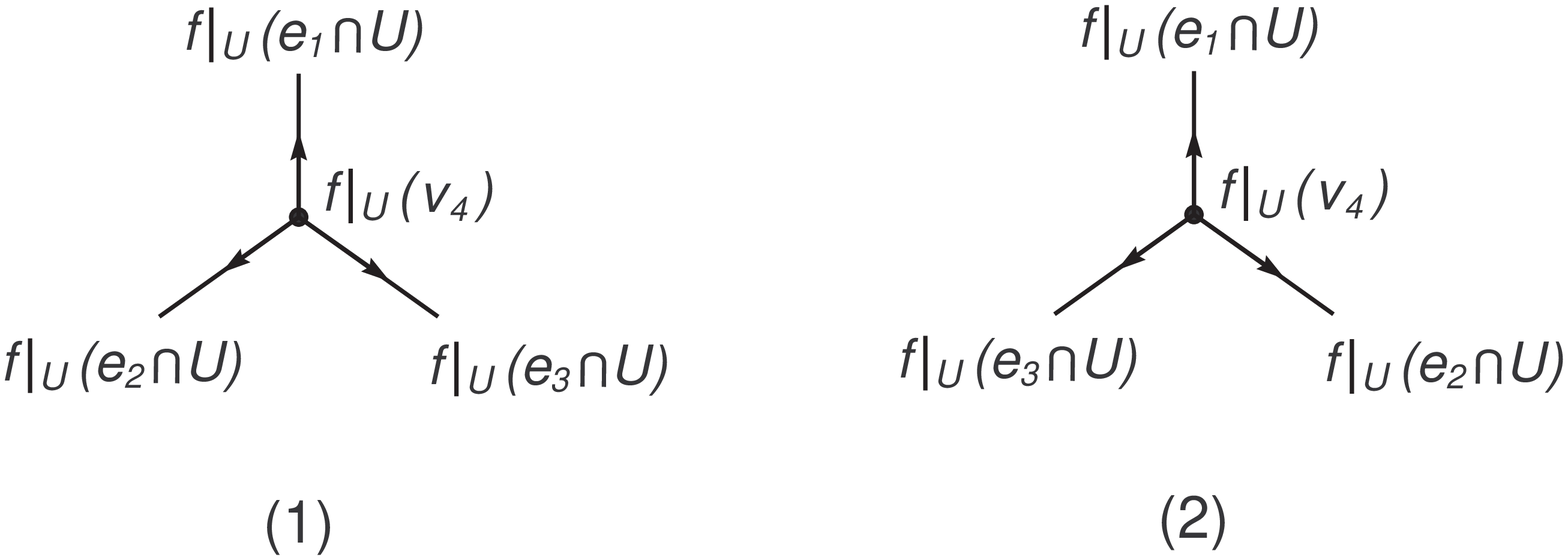}}
      \end{center}
   \caption{}
  \label{vrotate}
\end{figure}
\begin{Example}\label{k4}
{\rm  
Let $K_{4}$ be the complete graph on four vertices and $f$, $g$ and $h$ three generic plane immersions of $K_{4}$ as ilustrated in Fig. \ref{K4imm}. Then the symmetric tube of $K_{4}$ is a graph as illustrated in Fig. \ref{K4tube}. For a spanning tree $T_{K_{4}}=e_{1}\cup e_{2}\cup e_{3}$ of $K_{4}$, by Theorem \ref{present} we have that 
\begin{eqnarray*}
H^{1}({K}_{4}^{(0)};{\mathbb Z})
=\left<X_{4}^{23},X_{5}^{24},X_{6}^{34},Y_{1}^{31},
Y_{2}^{31},Y_{3}^{22},Y_{4}^{23}\right>\cong 
\underbrace{{\mathbb Z}\oplus{\mathbb Z}\oplus\cdots\oplus{\mathbb Z}}_{7~{\rm times}}.
\end{eqnarray*}
By calculating on the outcome of Examples \ref{circle} and \ref{star}, we have that 
\begin{eqnarray*}
{\mathcal R}(f)&=&(-1,1,-1,1,1,1,1),\\
{\mathcal R}(g)&=&(1,-1,1,1,-1,1,-1)~{\rm and} \\
{\mathcal R}(h)&=&(0,0,0,1,1,-1,1). 
\end{eqnarray*}
\begin{figure}[htbp]
      \begin{center}
\scalebox{0.35}{\includegraphics*{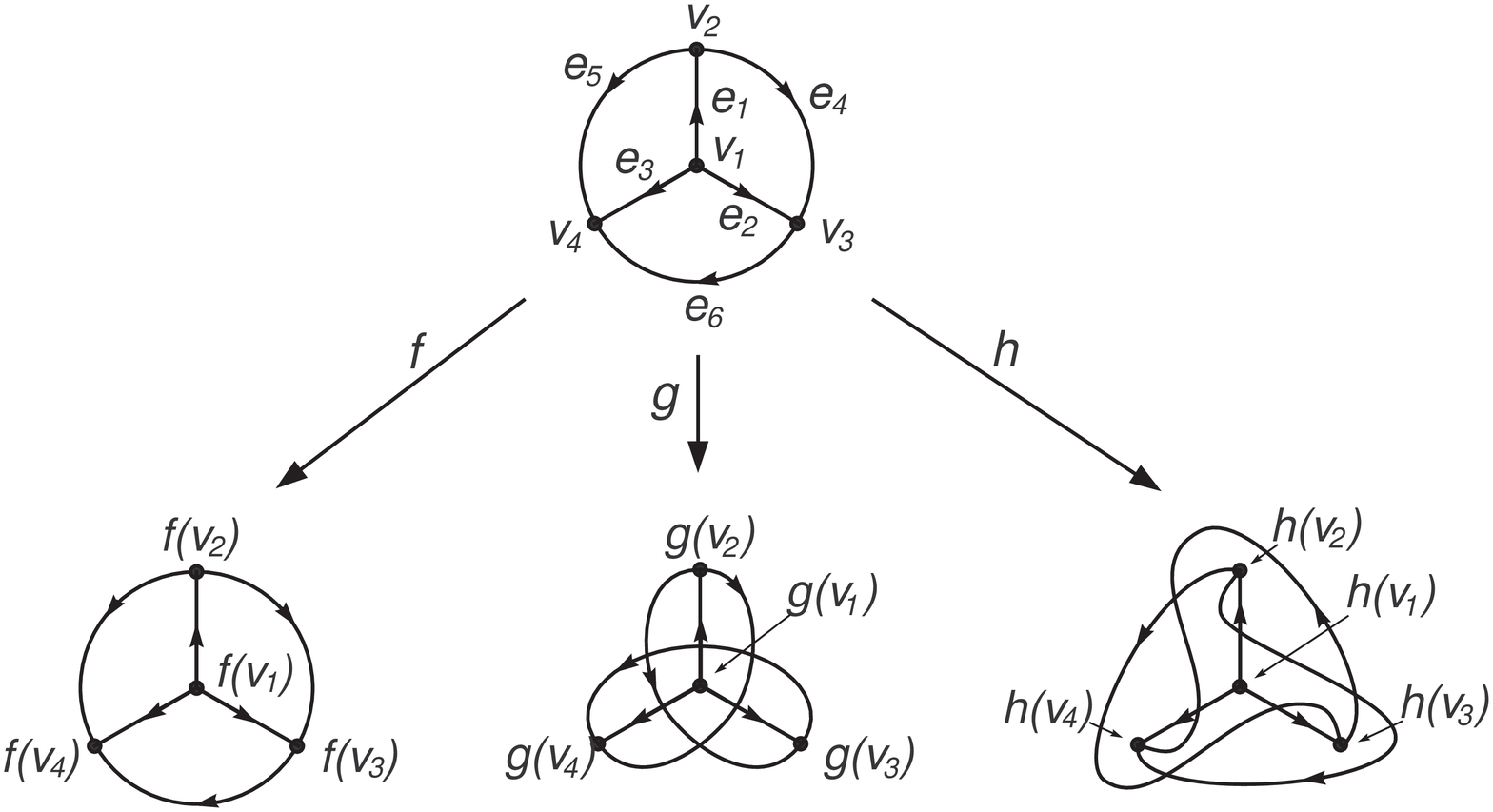}}
      \end{center}
   \caption{}
  \label{K4imm}
\end{figure}
\begin{figure}[htbp]
      \begin{center}
\scalebox{0.375}{\includegraphics*{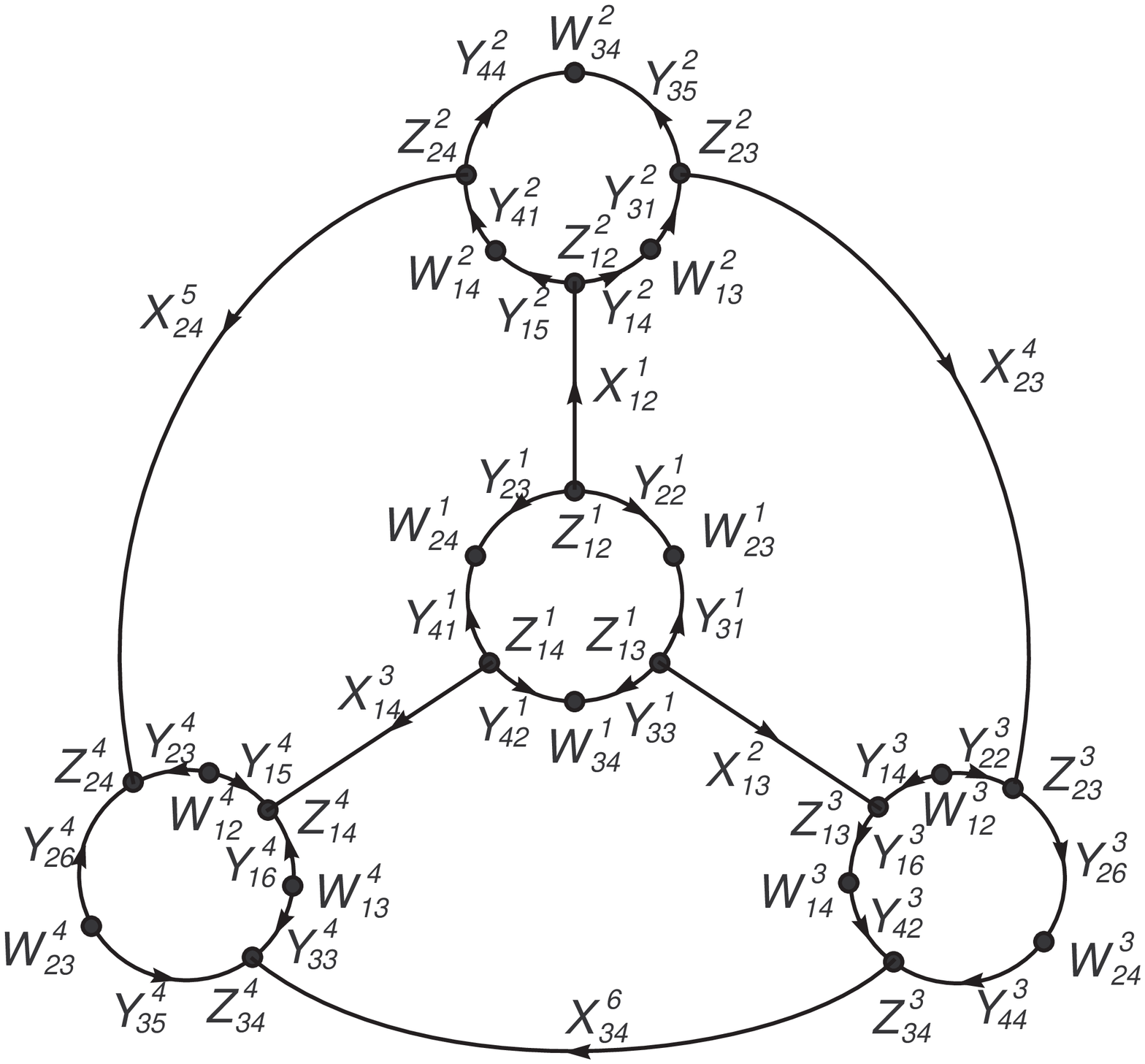}}
      \end{center}
   \caption{$K_{4}^{(0)}$}
  \label{K4tube}
\end{figure}
}
\end{Example}

\section{Proof of Theorem \ref{main}} 

First we show two lemmas which are needed to prove Theorem \ref{main}. 

\begin{Lemma}\label{wtrick} 
Each of the local moves as illustrated in Fig. \ref{whitneytrick} (4) and (5) are represented by a sequence of moves from the list as illustrated in Fig. \ref{rhomo2} (1), (2), (3) and ambient isotopies.  
\end{Lemma}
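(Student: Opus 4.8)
The plan is to treat this as a purely diagrammatic statement and to verify each of the moves (4) and (5) by exhibiting, in each case, an explicit finite sequence of the generating moves (1), (2), (3) and ambient isotopies that carries the left-hand picture of the move to its right-hand picture. Since each of these local moves is supported in a small disk and agrees with the identity near the boundary of that disk, it is enough to work entirely inside such a disk: I would fix the pattern in which the arcs (portions of edges of $G$) and the vertex, if any, meet the boundary, and then check that every intermediate diagram is again a generic plane immersion realizing that same boundary pattern. Verifying the lemma therefore reduces to producing two such sequences and confirming that each member of each sequence is one of the permitted moves.

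The essential tool is the \emph{Whitney trick}, which in this two-dimensional setting I would use in the following form. To push a strand across a portion of the diagram, first apply move (1) to create an auxiliary pair of transversal double points immediately adjacent to the region in question; this enlarges the diagram but leaves its regular homotopy class unchanged. Then use the triple-point move (2) repeatedly, together with ambient isotopies, to slide the strand across the existing double points, and, wherever the configuration involves a vertex, use the vertex move (3) to slide the strand past that vertex. Once the strand has been brought into its target position, remove the remaining auxiliary pair of double points by a second application of move (1). The cumulative effect of this sequence is exactly the reconfiguration prescribed by the target move.

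First I would carry this out for move (4): starting from its left-hand side, I create an auxiliary cancelling pair of double points with (1) near the arcs to be interchanged, transport one arc across the other by a succession of triple-point moves (2) and ambient isotopies, and finally delete the auxiliary pair with (1), arriving at the right-hand side of (4). For move (5), which (by the name \emph{whitneytrick} of Fig.~\ref{whitneytrick}) involves the neighborhood of a vertex of $G$, the same scheme applies, but now the vertex move (3) is indispensable: after introducing the auxiliary double points with (1), I slide the relevant edge around the vertex using (3) in combination with (2), and then cancel the auxiliary crossings with (1). This is precisely the step that distinguishes the graph case from the plane-curve case (cf. Corollary~\ref{maincor}, which needs only (1) and (2)).

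The main obstacle will be the bookkeeping rather than any conceptual difficulty. At each stage one must confirm that the diagram remains a generic immersion, that the arcs meet the boundary of the working disk in the correct pattern both before and after the sequence, and, most delicately, that no move which is \emph{not} on the list (1), (2), (3) has been smuggled in---for instance that every creation or annihilation of a double-point pair is genuinely an instance of (1) and that each passage past a vertex is genuinely an instance of (3). Keeping the positions of all strands explicit throughout the sequence, so that each elementary step matches one of the pictures in Fig.~\ref{rhomo2} up to ambient isotopy, is what makes the argument go through.
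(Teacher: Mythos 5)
Your proposal is correct and takes essentially the same approach as the paper: the paper's entire proof of this lemma is ``See Fig.~\ref{whitneytrickpr} and \ref{whitneytrickpr2},'' i.e.\ two explicit pictorial sequences of the moves (1), (2), (3) and ambient isotopies realizing (4) and (5), which is exactly the diagrammatic verification you describe. Your outline of the content of those sequences---creating a cancelling pair of double points with (1), transporting strands via (2) and, for the vertex move (5), via (3), then cancelling with (1)---matches the standard decomposition the figures exhibit.
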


\begin{proof}
See Fig. \ref{whitneytrickpr} and \ref{whitneytrickpr2}. 
\end{proof}

\begin{figure}[htbp]
      \begin{center}
\scalebox{0.4}{\includegraphics*{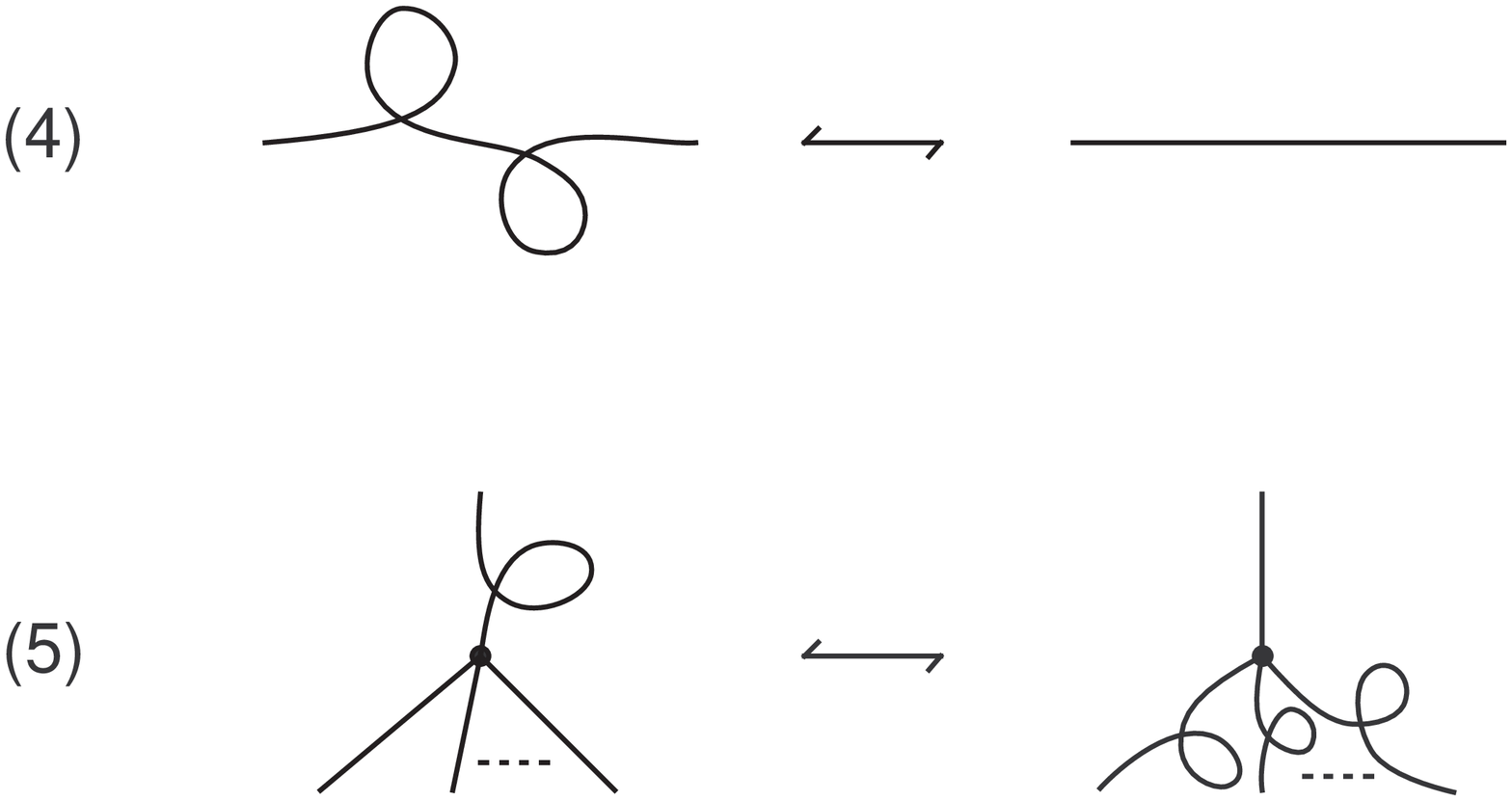}}
      \end{center}
   \caption{}
  \label{whitneytrick}
\end{figure}
\begin{figure}[htbp]
      \begin{center}
\scalebox{0.4}{\includegraphics*{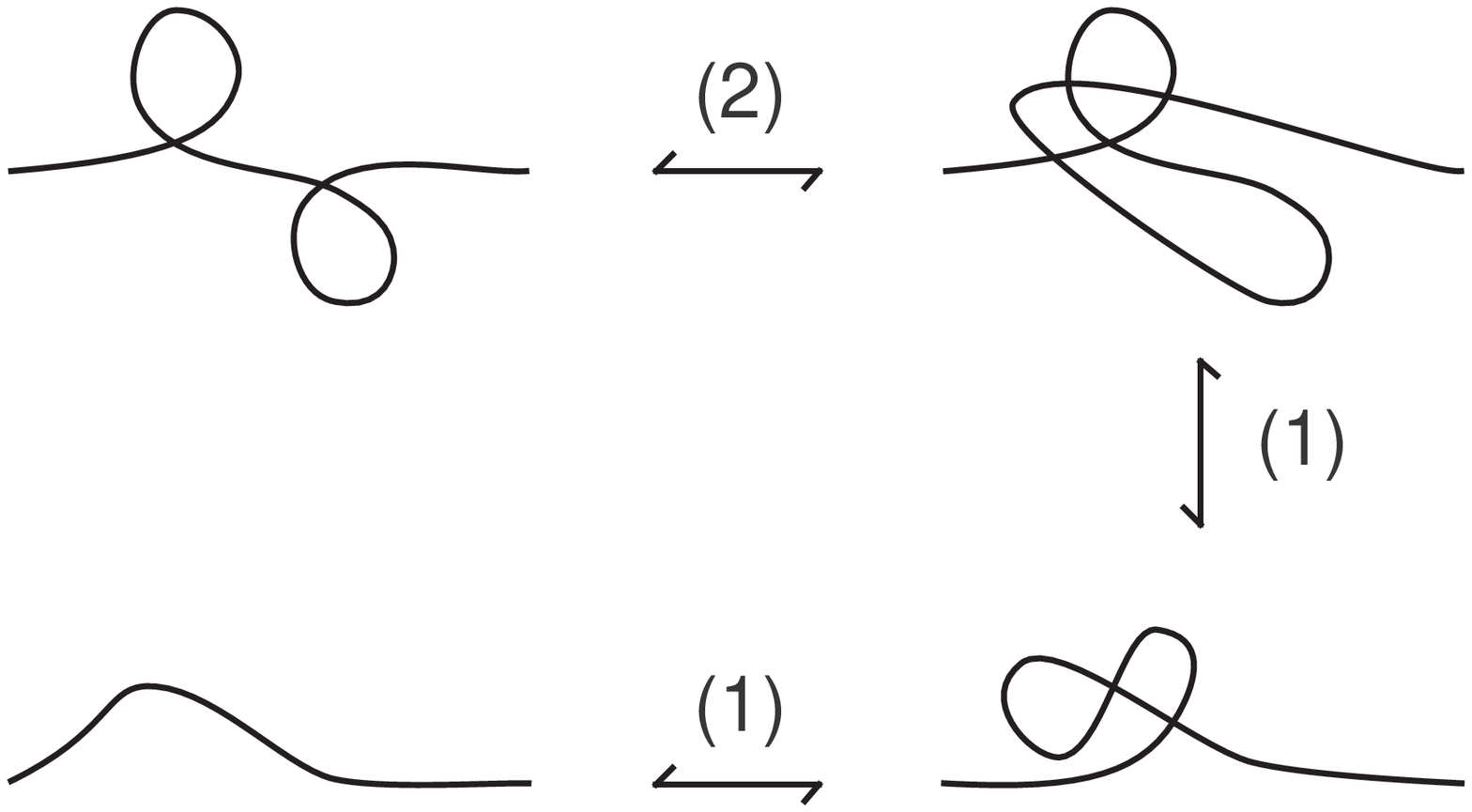}}
      \end{center}
   \caption{}
  \label{whitneytrickpr}
\end{figure}
\begin{figure}[htbp]
      \begin{center}
\scalebox{0.4}{\includegraphics*{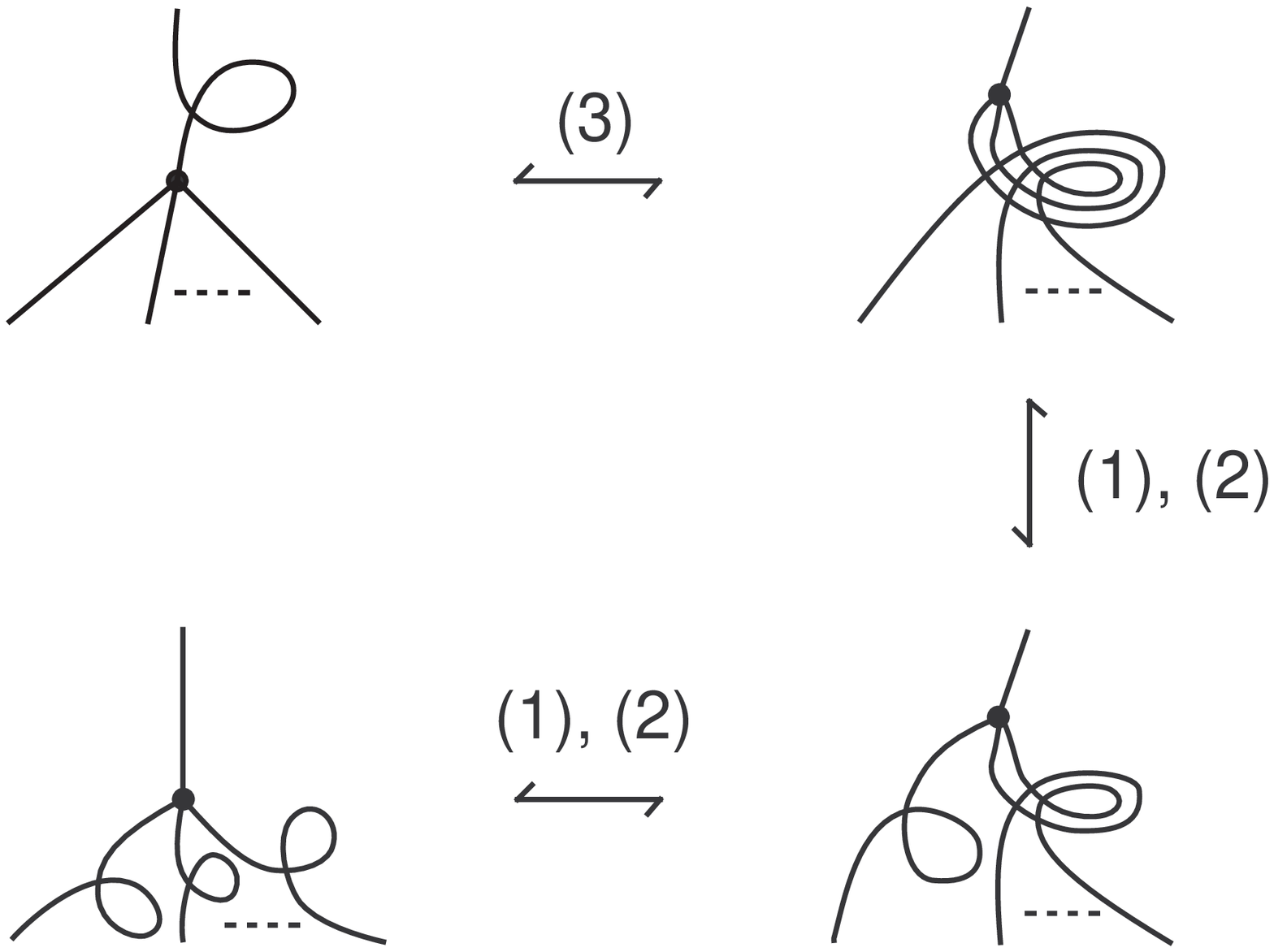}}
      \end{center}
   \caption{}
  \label{whitneytrickpr2}
\end{figure}

We remark here that the local move as illustrated in Fig. \ref{whitneytrick} (4) is none other than the Whitney trick. 

\begin{Lemma}\label{keylemma1} 
Let $G$ be a graph, $H$ a connected subgraph of $G$ and $f$ and $g$ two plane immersions of $G$. If ${\mathcal R}(f)={\mathcal R}(g)$, then ${\mathcal R}(f{|}_{H})={\mathcal R}(g{|}_{H})$. 
\end{Lemma}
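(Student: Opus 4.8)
The plan is to exhibit the restriction map $\mathcal{R}(f) \mapsto \mathcal{R}(f{|}_H)$ as a map induced by an inclusion of symmetric tubes, so that it commutes with the cohomological construction defining $\mathcal{R}$. Since $H$ is a connected subgraph of $G$, the inclusion $\iota: H \hookrightarrow G$ induces an inclusion $\widetilde{H}^{*} \hookrightarrow \widetilde{G}^{*}$ of deleted products, which is $\sigma$-equivariant and therefore descends to an inclusion $H^{*} \hookrightarrow G^{*}$ of symmetric deleted products. The key observation is that this inclusion is compatible with the symmetric tubes: a cell $[s, s_1 * s_2]$ of $H^{(0)}$ (with $s, s_1, s_2$ simplices of $H$) is also a cell of $G^{(0)}$, so $H^{(0)}$ sits inside $G^{(0)}$ as a subcomplex, and by Lemma \ref{main_lemma} it is obtained by the same vertex-substitution procedure restricted to the edges of $H$.

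First I would set up the naturality square. Using the definition of $\mathcal{R}$ in Section 2, both $\mathcal{R}(f)$ and $\mathcal{R}(f{|}_H)$ arise from pulling back the generator $\Sigma \in H^1(\mathbb{S}^1; \mathbb{Z})$ through the retraction $r$ and the maps $\bar{f}$, $\overline{f{|}_H}$ into $R(G)$ and $R(H)$ respectively. The essential point is that if $\mathcal{U} = \{U_\nu\}$ is an open covering of $G$ witnessing the immersion $f$, then $\{U_\nu \cap H\}$ is an open covering of $H$ witnessing $f{|}_H$, and the associated neighborhood $W_{\mathcal{U}{|}_H} = W_{\mathcal{U}} \cap \widetilde{H}^{*}$ (after quotienting). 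Thus there is a commutative diagram in which the map $\overline{f{|}_H}$ is literally the restriction of $\bar{f}$ to $W_{\mathcal{U}{|}_H} \subset W_{\mathcal{U}}$, and consequently $\overline{f{|}_H}^{*} r^{*}(\Sigma)$ is the image of $\bar{f}^{*} r^{*}(\Sigma)$ under the restriction homomorphism $H^1(W_{\mathcal{U}}; \mathbb{Z}) \to H^1(W_{\mathcal{U}{|}_H}; \mathbb{Z})$ induced by inclusion.

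The plan then is to transport this to the symmetric tubes via Theorem \ref{iso}. I would define a homomorphism $\rho: R(G) \to R(H)$ fitting into the commutative square
\begin{eqnarray*}
\begin{array}{ccc}
H^1(G^{(0)}; \mathbb{Z}) & \stackrel{\cong}{\longrightarrow} & R(G)\\
\downarrow & & \downarrow{\scriptstyle\rho}\\
H^1(H^{(0)}; \mathbb{Z}) & \stackrel{\cong}{\longrightarrow} & R(H),
\end{array}
\end{eqnarray*}
where the left vertical arrow is the restriction homomorphism induced by the inclusion $H^{(0)} \hookrightarrow G^{(0)}$ of symmetric tubes. Since the deformation retract $j: G^{*} \setminus P(G^{*}) \to G^{(0)}$ of Section 3 restricts compatibly over $H$, this square commutes, and therefore $\rho$ sends $\mathcal{R}(f) = i_{G^{(0)}}^{*}(\text{class})$ to $\mathcal{R}(f{|}_H)$. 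Given $\mathcal{R}(f) = \mathcal{R}(g)$ in $R(G)$, applying the well-defined homomorphism $\rho$ yields $\mathcal{R}(f{|}_H) = \mathcal{R}(g{|}_H)$ immediately.

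The main obstacle I anticipate is verifying carefully that the covering $\{U_\nu \cap H\}$ genuinely produces the restricted neighborhood $W_{\mathcal{U}{|}_H}$ as the intersection $W_{\mathcal{U}} \cap H^{*}$, and that the inclusion of symmetric deleted neighborhoods is compatible with passing to the inductive limit $R(\cdot)$—in other words, that $\rho$ is genuinely well-defined on the colimit independent of the choice of representing neighborhood. This requires checking that the restriction maps commute with the structure maps $i_\lambda^\mu{}^{*}$ of the two inductive systems, a routine but slightly delicate diagram chase using that inclusions of subgraphs respect the $\prec$-ordering on symmetric deleted neighborhoods. Everything else reduces to functoriality of cohomology and the explicit compatibility of the tube construction in Lemma \ref{main_lemma} with restriction to a connected subgraph.
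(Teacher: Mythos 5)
Your proposal is correct and follows essentially the same route as the paper: the paper's proof simply notes that the inclusion $i:H\to G$ induces a restriction homomorphism $i^{*}:H^{1}(G^{(0)};{\mathbb Z})\to H^{1}(H^{(0)};{\mathbb Z})$ and asserts that $i^{*}({\mathcal R}(f))={\mathcal R}(f|_{H})$, which is exactly the naturality you verify in detail via the compatibility of coverings, deleted neighborhoods, and the inductive limit. Your write-up just makes explicit the checks the paper dismisses as ``clear.''
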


\begin{proof}
Let $i:H\to G$ be the inclusion. Since $i$ is injective, the homomorphism 
\begin{eqnarray*}
i^{*}:H^{1}(G^{(0)};{\mathbb Z})\longrightarrow H^{1}(H^{(0)};{\mathbb Z})
\end{eqnarray*}
is induced by $i$. Clearly $i^{*}({\mathcal R}(f))={\mathcal R}(f{|}_{H})$ and $i^{*}({\mathcal R}(g))={\mathcal R}(g{|}_{H})$. Therefore by the assumption we have the desired conclusion. 
\end{proof}

For a generic plane immersion $f$ and a vertex $v_{s}$ of a graph $G$, a cyclic order of the edges of $G$ incident to $v_{s}$ is determined by considering a neighbourhood $U$ of $v_{s}$ so that $f|_{U}$ is an embedding. We call it a {\it cyclic order of $f(v_{s})$}.

\begin{proof}[Proof of Theorem \ref{main}.] 
Since (1)$\Rightarrow$(3) is shown by Proposition \ref{inva} and (2)$\Rightarrow$(1) is clear, it is sufficient to show that (3)$\Rightarrow$(2). Assume that ${\mathcal R}(f)={\mathcal R}(g)$. In the following we show that $f$ and $g$ are transformed into each other by the moves as illustrated in Fig. \ref{rhomo2} (1), (2), (3), Fig. \ref{whitneytrick} (4), (5) and ambient isotopies. Then by Lemma \ref{wtrick}, we have the desired conclusion. Since ${\mathcal R}(f)={\mathcal R}(g)$, by Lemma \ref{keylemma1} we have that ${\mathcal R}(f|_{{\rm st}(v_{s})})={\mathcal R}(g|_{{\rm st}(v_{s})})$ for any vertex $v_{s}$ of $G$. Then by Example \ref{star} we have that the cyclic order of $f(v_{s})$ is equal to the cyclic order of $g(v_{s})$ for any vertex $v_{s}$ of $G$. Let $T_{G}$ be a spanning tree of $G$. By using the moves as illustrated in Fig. \ref{rhomo2} (1), (2), (3), Fig. \ref{whitneytrick} (5) and ambient isotopies in case of necessity, we can deform $f$ (resp. $g$) so that $f|_{T_{G}}$ (resp. $g|_{T_{G}}$) is an embedding. Since the cyclic order of $f(v_{s})$ is equal to the cyclic order of $g(v_{s})$ for any vertex $v_{s}$ of $G$, we may assume that $f|_{T_{G}}=g|_{T_{G}}$. 
We set $E(G) \setminus E(T_{G})=\{e_{k_{1}},e_{k_{2}},\ldots,e_{k_{\beta}}\}$, 
where $\beta$ denotes the first Betti number of $G$. Let $p_{k_{i}}$ be the unique path on $T_{G}$ which connects the terminal vertices of $e_{k_{i}}$. We denote a cycle $e_{k_{i}} \cup p_{k_{i}}$ by ${\gamma}_{k_{i}}$. Note that the double points of $f|_{{\gamma}_{k_{i}}}$ (resp. $g|_{{\gamma}_{k_{i}}}$) are only the double points of $f|_{e_{k_{i}}}$ (resp. $g|_{e_{k_{i}}}$). Then, by using the moves as illustrated in Fig. \ref{rhomo2} (1), (2), (3), Fig. \ref{whitneytrick} (4) and ambient isotopies in case of necessity, we can deform $f|_{{\gamma}_{k_{i}}}$ into the generic plane immersion of $\gamma_{k_{i}}$ as illustrated in Fig. \ref{rhclass3} (1) or (2) ($i=1,2,\ldots,\beta$). Then, by Lemma \ref{keylemma1} we have that ${\mathcal R}(f{|}_{{\gamma}_{k_{i}}})={\mathcal R}(g{|}_{{\gamma}_{k_{i}}})$, namely $f{|}_{{\gamma}_{k_{i}}}$ and $g{|}_{{\gamma}_{k_{i}}}$ have the same rotation number. Thus we may assume that $f|_{{\gamma}_{k_{i}}}=g|_{{\gamma}_{k_{i}}}$ for $i=1,2,\ldots,\beta$. This implies that we can deform $f$ and $g$ identically by the moves as illustrated in Fig. \ref{rhomo2} (1), (2), (3), Fig. \ref{whitneytrick} (4), (5) and ambient isotopies. This completes the proof. 
\end{proof}
\begin{figure}[htbp]
      \begin{center}
\scalebox{0.4}{\includegraphics*{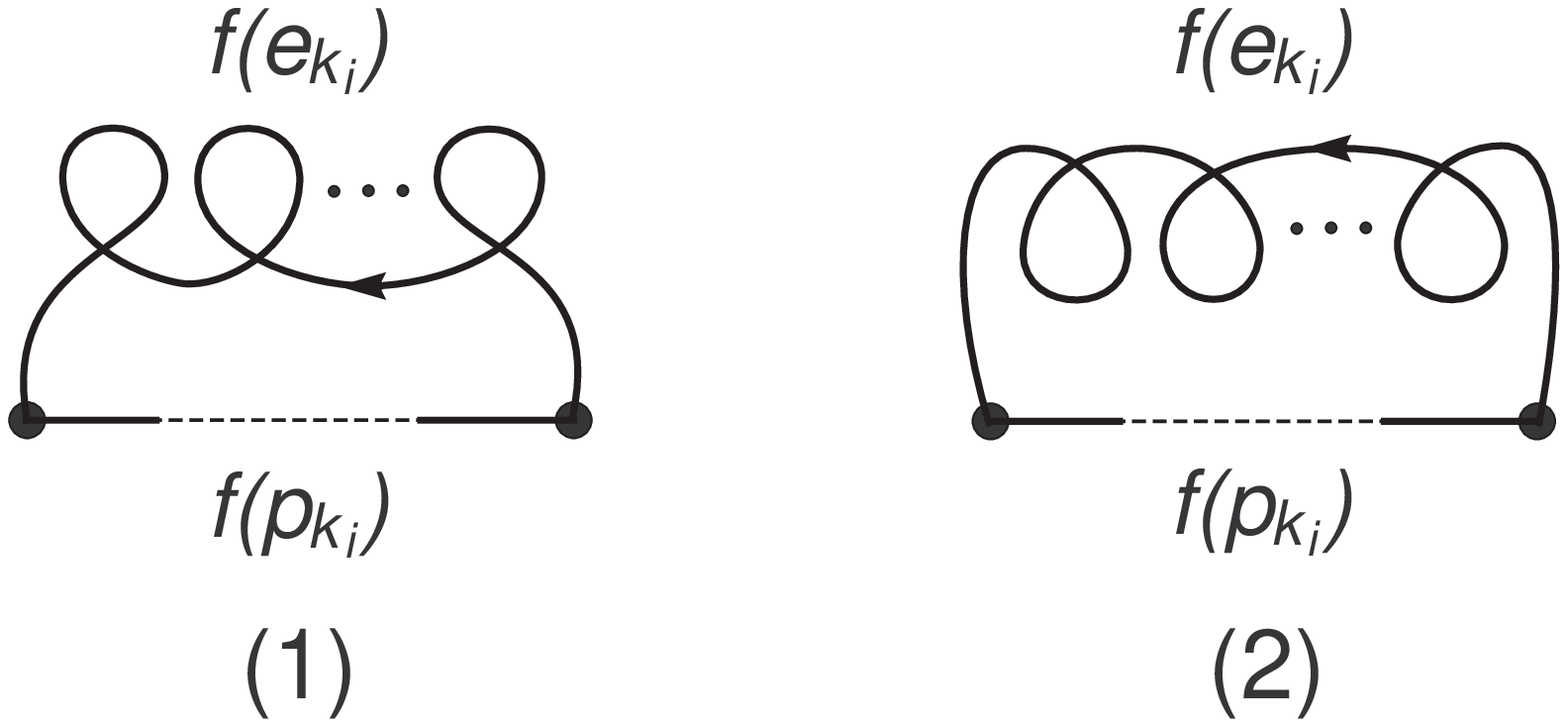}}
      \end{center}
   \caption{}
  \label{rhclass3}
\end{figure}

\section*{Acknowledgment}

The author is  most grateful to Professors Tohl Asoh and Tsutomu Yasui for their invaluable comments. 

\normalsize

\end{document}